\tikzset{every picture/.style={line width=0.75pt}} 
\newtheorem{thm}{Theorem}
\newtheorem{prop}{Proposition}
\newtheorem{lem}{Lemma}
\newtheorem{cor}{Corollary}
\title{Sharkovsky's Ordering in the Mandelbrot Set}
\author{Reila Zheng}
\begin{document}
\maketitle

\begin{abstract}
    Sharkovsky's ordering describes orbit forcing of interval maps, and generalizations of Sharkovsky's ordering exist for maps of trees. In this paper I will describe Sharkovsky's ordering and analogous orderings for trees, and their occurrence on the Mandelbrot set.
\end{abstract}

\section{Introduction}
Orbit forcing in dynamical systems is the phenomenon where the existence of orbits of a certain combinatorial type forces the existence of orbits of another combinatorial type. The original problem of orbit forcing on continuous maps of the interval was solved by Sharkovsky in the 1960s. 
In his celebrated theorem, he defined a total ordering $<_2$ on the natural numbers such that any interval map $f:I \rightarrow I$ with a periodic point of period $m$ also must have a periodic point of period $n$ for all $n <_2 m$. That is, the set of periods $Per(f)$ of periodic orbits of $f$ is a tail of this ordering. For a natural number $k \geq 2$, the $k$-Sharkovsky ordering is a partial ordering on $\mathbb{N}$ that describes the periods of periodic points of a continuous map on a tree $f: T \rightarrow T$. That is, if $T$ is a tree with $k$ leaves, then the set of periods $Per(f)$ of periodic orbits of $f$ is a union of tails of $i$-Sharkovsky orderings, for $2 \leq i \leq k$.

The $2$-Sharkovsky ordering manifests itself in the parameter space of real quadratic polynomials $z \mapsto z^2 + c$ where $c \in \mathbb{R}$. That is, for each $n \in \mathbb{N}$, let $c_n$ be the largest parameter such that the critical point of $c_n$ is of period $n$. Then $c_n < c_m$ iff $n >_2 m$. The goal of this paper is to generalize this ordering to the veins of the complex quadratic family.

A \emph{vein} on the Mandelbrot set is an embedded arc from the parameter $0$ in the main cardioid to a tip. 
We may assume that the vein intersects the boundary of every hyperbolic component in at most two points. 

Suppose $V$ is a vein of the Mandelbrot set $\mathcal{M}$. For hyperbolic components $C_1, C_2$ that intersect $V$, we say $C_1$ is \emph{closer} to the main cardioid than $C_2$ along $V$ if the connected component of $V\setminus C_2$ intersecting $C_1$ also intersects the main cardioid. For $C_1, C_2$ hyperbolic components on $V$, I will use the notation $C_1 \prec_V C_2$ if $C_1$ is closer to the main cardioid than $C_2$ along $V$. Since there are finitely many hyperbolic components of a given period, if there exist components of period $n$ which intersect $V$ nontrivially, there is a unique component of period $n$ closest to the main cardioid along $V$, which we will denote by $C_V(n)$. My results describe the ordering $\succ_V$ and dynamics of the parameters in the hyperbolic components $C_V(n)$ along $V$.

Every quadratic map $f_c(z) = z^2 + c$ has two fixed points, $\alpha$ and $\beta$, where $\beta$ is the landing point of the unique fixed external ray of angle 0. For natural numbers $k>p>0$ where $\gcd(p,k) = 1$, the $p/k$ limb of the Mandelbrot set is the set of parameters with combinatorial rotation number $p/k$ at $\alpha$ \cite{PR}.

The principal Misiurewicz point on this limb is the unique parameter $m_{p/k}$ that maps the critical point to the $\alpha$ fixed point with minimal preperiod $k-1$. Then $\mathcal{M} \setminus \{m_{p/k}\}$ is the union of $k$ connected components. Denote as $\mathcal{M}_{p/k,0}$ the connected component that contains the main cardioid. For each of the other connected components, there is a parameter $c_l = c_{p/k, l}$ that maps the critical point to the $\beta$ fixed point with minimal preperiod. Let $c_1$ be this parameter that maps to $\beta$ with minimal preperiod across all of $\mathcal{M} \setminus \mathcal{M}_{p/k,0}$. Then $c_1$ has preperiod $k-1$, and let $\mathcal{M}_{p/k,1}$ be the component that contains $c_1$. For each of the $k-2$ other connected components, consider the parameter that maps the critical point to the $\beta$ fixed point with minimal preperiod in that connected component. If the preperiod is $k+l-1$, then we will denote the parameter by $c_l$, with $l = 2, \dots, k-1$, and call the connected component that contains it $\mathcal{M}_{p/k, l}$. A \emph{$(k,l)$-vein} is a vein from the main cardioid to the $c_l$ parameter on the $p/k$ limb. 

When $l =1$, this is a principal vein. We will call the vein $V$ a secondary vein when $2 \leq l \leq k-1$. For example, in the $2/5$-limb, the principal vein is the vein from $0$ to $c_1$ the landing point of the ray with external angle of $5/16$ of preperiod $4$. There are three secondary veins, $V_l$ is the vein from $0$ to $c_l$ the landing point of the ray with external angle of $\theta_l$, where $\theta_2 = 19/64$ of preperiod 6, $\theta_3 = 41/128$ of preperiod 7, and $\theta_4 = 77/256$ of preperiod 8.

\begin{figure}[h!]
    \centering
    \includegraphics[width=0.5\linewidth]{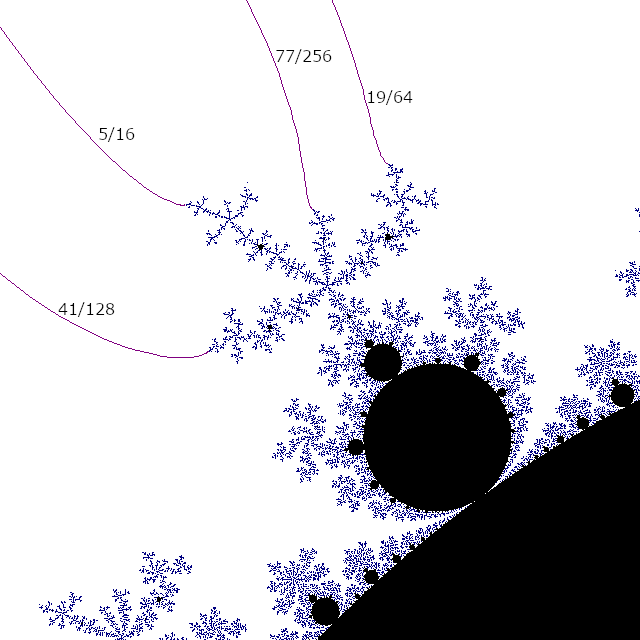}
    \caption{Principal and Secondary Veins in the 2/5-Limb.}
    \label{fig:enter-label}
\end{figure}

\begin{thm}[Ordering of Hyperbolic Components along a Vein]
\label{ordering}

Let $V$ be a $(k,l)$-vein and let $\mathbb{N}(k,l) := \{n \ : \ n \geq k+l\} \cup \{k,1\}$. Then:

\begin{enumerate}
\item $C_V(n)$ exists exactly for all $n \in \mathbb{N}(k,l)$. 

\item If $l =1$ and $n_1, n_2 \in \mathbb{N}(k,1)$ satisfy $n_1 >_k n_2$, then $C_V(n_1) \succ_V C_V(n_2)$. 

\item If $l \neq 1$ and $n_1, n_2 \in \mathbb{N}(k,l)$ satisfy $n_1 >_k n_2$ and $n_2 \leq_k k + l$, then $C_V(n_1) \succ_V C_V(n_2)$. 

\end{enumerate}
\end{thm}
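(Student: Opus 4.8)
The plan is to convert the statement into a question about tree endomorphisms via Hubbard trees, and then combine the tree $k$-Sharkovsky theorem recalled in the introduction with the combinatorial description of the vein $V$. First I would set up the dictionary. For a postcritically finite $c$ on $V$ --- in particular the center $c_0$ of a hyperbolic component meeting $V$ --- the polynomial $f_c$ carries a canonical Hubbard tree $T_c$ on which $f_c$ restricts to a tree endomorphism with $0$ a marked vertex. Membership in the $p/k$-limb makes $\alpha$ a branch point of valence $k$ around which $f_c$ rotates the arms with combinatorial rotation number $p/k$; membership in the $(k,l)$-vein forces $T_c$ to be modeled on the tree $T_{k,l}$ --- the Hubbard tree of the endpoint $c_l$, a $k$-star centered at $\alpha$ with one distinguished arm along which the critical value is pushed out to combinatorial depth at least $l$ before it can return through $\alpha$ (for a principal vein $T_{k,1}$ is exactly the $k$-star; for a secondary vein one arm is subdivided by the $l-1$ extra points in the critical orbit of $c_l$). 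I would also use the structural theory of veins (Douady--Hubbard, and Tan Lei, Riedl, Roesch for the arc and local-connectivity properties): $V$ is an arc along which the kneading data, equivalently the internal address in the sense of Bruin--Schleicher, of $f_c$ varies monotonically, so the hyperbolic components meeting $V$ are totally ordered by $\prec_V$ and that order is read off the (twisted-lexicographic) order on kneading sequences, while conversely every admissible kneading sequence compatible with $V$ is realized on $\overline{V}$ by Thurston rigidity.

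Granting this dictionary, part (1) splits into a forbiddance and a realization. For forbiddance I would argue directly from the combinatorics of $T_{k,l}$: for $c\in V$ the critical orbit must travel out along the distinguished arm to depth at least $l$ and then turn through the $k$-cycle of arms at $\alpha$, so a periodic critical orbit can have period only $1$ (the critical point never leaves the central region, giving the main cardioid $C_V(1)$), period $k$ (one turn around $\alpha$, giving the $p/k$-satellite attached to the cardioid, which lies on every vein of the limb and equals $C_V(k)$), or period at least $k+l$; the periods $2,\dots,k-1$ and $k+1,\dots,k+l-1$ cannot occur. For realization, for each $n\in\mathbb{N}(k,l)$ I would write down a postcritically finite tree map on $T_{k,l}$ whose critical point is periodic of period $n$ with kneading sequence compatible with $V$ --- obtained by forcing the kneading sequence of $c_l$ to become periodic of period $n$ --- check it is admissible, and conclude by Thurston realization that it equals $f_{c_0}$ for some $c_0\in\overline{V}$, the center of the desired component. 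Equivalently this is the realization half of the tree $k$-Sharkovsky theorem applied to the model map.

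Parts (2) and (3) I would obtain by a forcing argument. Let $c_0$ be the center of $C_V(n_1)$; then $f_{c_0}$ is a tree map on a $k$-star-modeled tree with a periodic orbit of period $n_1$, namely the critical orbit, and this orbit has the combinatorial type that forces the full $k$-Sharkovsky tail below $n_1$ (this is where the precise form of the tree $k$-Sharkovsky theorem is invoked). Hence if $n_1>_k n_2$ then $f_{c_0}$ has a periodic orbit of period $n_2$. Monotonicity of the kneading data along $V$ --- the vein analogue of the existence of periodic windows in the quadratic family --- then produces a period-$n_2$ hyperbolic component meeting $V$ at a parameter $\preceq_V c_0$, and since $C_V(n_2)$ is by definition the $\prec_V$-least period-$n_2$ component on $V$ and $n_1\neq n_2$, we get $C_V(n_2)\prec_V C_V(n_1)$, i.e.\ $C_V(n_1)\succ_V C_V(n_2)$. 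On a principal vein ($l=1$) the model is exactly the $k$-star and every orbit forced this way corresponds to a component actually on $V$, which gives (2). On a secondary vein the distinguished arm of $T_{k,l}$ is subdivided, so a forced period-$n_2$ orbit can correspond to a hyperbolic component sitting on a side branch off $V$ rather than on $V$ itself; this is ruled out exactly when $n_2$ is low enough in the order, namely when $n_2\leq_k k+l$, which is the hypothesis of (3).

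The step I expect to be the main obstacle is the monotonicity / periodic-window input used in the forcing argument: passing from ``the tree map $f_{c_0}$ has a period-$n_2$ orbit'' to ``$V$ carries a period-$n_2$ hyperbolic component no farther from the main cardioid than $C_V(n_1)$''. This requires $V$ to be a genuine arc with controlled local connectivity of $\mathcal{M}$ along it, and, crucially, that kneading data (internal addresses) increase monotonically as one moves outward along $V$, so that periodic kneading sequences are realized in the same order in which they are forced --- the vein analogue of Milnor--Thurston monotonicity for real unimodal maps. For principal veins this is accessible from the local-connectivity results cited above; for secondary veins one must in addition track where a forced orbit sits relative to the subdivided arm of $T_{k,l}$, and keeping that bookkeeping honest is precisely what forces the restriction in part (3). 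By contrast the purely combinatorial input --- that the $k$-star forcing order is $>_k$ and that the period set of the model map is $\mathbb{N}(k,l)$ --- is already contained in the tree $k$-Sharkovsky theorem and enters the argument as a black box.
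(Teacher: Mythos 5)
Your overall strategy---read the periods off the Hubbard tree of a postcritically finite parameter on $V$, invoke tree orbit forcing, and then push the forced orbits back into parameter space to order the components---is the same broad strategy as the paper. But there are two places where you assert exactly the thing that the paper has to work to establish, and the first of these is a genuine gap. In parts (2) and (3) you write that the critical orbit of the center of $C_V(n_1)$ ``has the combinatorial type that forces the full $k$-Sharkovsky tail below $n_1$'' and attribute this to the tree $k$-Sharkovsky theorem. Baldwin's theorem does not give you this: it says only that the period set of a $k$-star map is a \emph{union of tails of the $>_i$ orderings for $2\le i\le k$}, and a single period-$n_1$ orbit on a $k$-star forces the full $>_k$ tail only if it has a specific combinatorial arrangement (the spiral cycle, the $k$-star analogue of the \v{S}tefan cycle). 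An orbit confined to two arms, for example, forces only a $2$-Sharkovsky tail. The heart of the paper's argument is precisely the verification that the critical orbits of the centers of the minimal components $C_V(ik+l)$ \emph{are} spiral cycles: this is done by transporting the \v{S}tefan cycles on the real vein to principal veins by quasiconformal surgery of sectors (Lemma~\ref{principal}) and then to secondary veins by Riedl's homeomorphism between consecutive subwakes of the principal Misiurewicz point (Lemma~\ref{secondary}), after which the Markov graph of the spiral graph is computed explicitly to exhibit the $k$-cycle and the $n_1$-cycle whose concatenations produce all periods $m\le_k n_1$. Your proposal contains no mechanism for producing this combinatorial type, so the forcing step does not go through as written.

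The second divergence is the parameter-space step. You route it through monotonicity of kneading data / internal addresses along $V$, which you yourself flag as the main obstacle; the paper explicitly declines to use internal addresses and instead uses Milnor's orbit portraits: each periodic orbit of $f_{c_0}$ of valence at least two determines a wake containing $c_0$, wakes of components along a vein are nested, and the outermost period-$n_2$ wake containing $c_0$ has root $C_V(n_2)$, which immediately gives $C_V(n_2)\prec_V C_V(n_1)$. This is both more elementary and avoids the monotonicity input you would otherwise need to prove. Similarly, for the realization half of part (1) the paper does not need Thurston realization of abstract tree maps (where you would still have to show the realized parameter lands on $\overline V$): it computes the Markov graph of the Hubbard tree of the tip $c_l$ itself, finds nonrepetitive loops of every length in $\mathbb{N}(k,l)$, and again converts these to nested wakes along $V$. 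For the forbiddance of periods strictly between $k$ and $k+l$ the paper uses the Lau--Schleicher narrowness of $C_V(k+l)$ as a visible component in the $1/2$-wake of $C_V(k)$ (Proposition~\ref{visible}), rather than a direct combinatorial argument on the model tree. In summary: same architecture, but the two load-bearing inputs you black-box (spiral-cycle combinatorics of the critical orbit, and the orbit-to-parameter transfer) are exactly where the paper's actual content lies, and the first one is a gap that your proposal cannot close without something like the surgery and Riedl transfer lemmas.
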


\begin{thm} [Dynamics of Hyperbolic Components along a Vein]
\label{dynamics}
Let $V$ be a $(k,l)$-vein. Then:
\begin{enumerate}
    \item If $i \geq 1$ then the dynamics of $f_c$ for the centre $c$ of $C_V(ik+l)$ is a spiral graph and $Per(f_c) = \{m  \ : \ m \leq_k ik+l\}$. 

    \item For any $0 < l' < k$ with $l' \neq l$, the dynamics of $f_c$ for the centre $c$ of  $C_V(ik+l')$ is not a spiral graph.

\end{enumerate}
\end{thm}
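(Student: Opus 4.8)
The plan is to argue entirely at the level of Hubbard trees, using the dictionary between postcritically finite (pcf) quadratic polynomials and their Hubbard trees (Douady--Hubbard, Poirier) together with the tree-map forcing theory that underlies the $k$-Sharkovsky ordering. The centre $c$ of any hyperbolic component is superattracting, hence pcf, so $f_c$ has a well-defined Hubbard tree $H_c$ — a finite topological tree on which $f_c$ is expanding with all endpoints on the critical orbit — and since $V$ lies in the $p/k$-limb, every such $H_c$ carries a fixed branch point $\alpha$ of valence $k$ whose $k$ incident branches are cyclically permuted by $f_c$ with combinatorial rotation number $p/k$. I would first record the Hubbard tree $H^{(0)}$ of the endpoint $c_l$ of $V$: since $f_{c_l}^{\,k+l-1}(0)=\beta$, it is a ``finite spiral'', the $k$-branched tree on which the orbit $0\mapsto c_l\mapsto\cdots\mapsto\beta$ winds once around $\alpha$ every $k$ steps and ends at the $\beta$-vertex after $k+l-1$ steps. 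The machinery behind Theorem~\ref{ordering} shows (at least implicitly) that the Hubbard trees of the centres along $V$ are nested for the sub-Hubbard-tree partial order, each a ``closing up'' of $H^{(0)}$, and that under this correspondence $C_V(n)$ is the $\prec_V$-minimal period-$n$ component, hence corresponds to the minimal period-$n$ Hubbard tree compatible with the $(k,l)$-data.

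\emph{Part (1).} Fix $i\ge1$ and put $n=ik+l$; by Theorem~\ref{ordering} the component $C_V(n)$ exists. I would build the candidate spiral graph $\Gamma_n$ by closing up the finite spiral $H^{(0)}$ after exactly $n$ winding steps, so the critical point becomes periodic of period $n$ with its orbit making $i$ complete turns about $\alpha$ and then $l$ further steps before closing up, the ``$l$'' reflecting the preperiod $k+l-1$ of $c_l$. One checks that $\Gamma_n$ satisfies Poirier's admissibility axioms (expansivity and the cyclic-order and normalization conditions at $\alpha$ and at the critical value), so it is realized by a unique pcf quadratic $c$, and tracking external angles places $c\in V$ with exact period $n$. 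To identify $c$ with the centre of $C_V(n)$ — i.e.\ the $\prec_V$-innermost period-$n$ centre — note that $\Gamma_n$ is the minimal period-$n$ Hubbard tree with the $(k,l)$-data: any period-$n$ tree occurring on $V$ must contain, as a subtree with the same branching and rotation at $\alpha$, the winding spiral and hence a copy of $\Gamma_n$, so by the nesting above $c$ is innermost and $H_c=\Gamma_n$. Finally I would read $Per(f_c)$ off $\Gamma_n$ directly: its Markov partition cut out by the critical orbit has transition graph the ``cyclic rotation of the $k$ branches with one fold at $0$'', a period-$n$ orbit of this type forces exactly the tail $\{m:\ m\le_k n\}$ by the forcing theory underlying the $k$-ordering, and since $\Gamma_n$ is minimal its transition matrix supports no periodic orbits of periods outside that tail; hence $Per(f_c)=\{m:\ m\le_k ik+l\}$. (Here $\Gamma_n$ is a $k$-leaved tree, which is why the $k$-Sharkovsky ordering is the relevant one.)

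\emph{Part (2).} Let $0<l'<k$ with $l'\neq l$ and suppose $C_V(ik+l')$ exists, i.e.\ $ik+l'\in\mathbb{N}(k,l)$. If its Hubbard tree were a spiral graph, then, being a period-$(ik+l')$ spiral graph with $k$ branches and rotation number $p/k$, the classification of spiral graphs obtained by running the Part~(1) construction in reverse would apply: for fixed $p/k$ a spiral graph is determined by its period $m$, and the spiral graph of period $m$ is realized precisely at the centre of $C_W(m)$ for the vein $W$ whose tip has preperiod $k+(m\bmod k)-1$, namely the $(k,\,m\bmod k)$-vein. This would force the Hubbard tree of $C_V(ik+l')$ to be realized on the $(k,l')$-vein. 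But $l'\neq l$ makes the tips $c_{l'}\neq c_l$ distinct Misiurewicz points, so the $(k,l')$-vein is not $V$; this contradiction shows the Hubbard tree of $C_V(ik+l')$ is not a spiral graph.

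The main obstacle is the minimality/nesting step invoked twice in Part~(1): establishing that ``the minimal period-$n$ Hubbard tree carrying the $(k,l)$-data'' coincides with the Hubbard tree of the centre of $C_V(n)$. This needs a precise account of how Hubbard trees grow as the parameter moves outward along $V$, together with a forcing argument that every period-$n$ pattern realized on $V$ dominates $\Gamma_n$. The companion transition-matrix computation in Part~(1) — checking that $\Gamma_n$ contributes no periods beyond $\{m:m\le_k n\}$ — is routine but must be carried out with care to pin the period set down exactly.
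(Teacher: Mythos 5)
Your overall strategy --- realize an abstract spiral tree via Poirier and then identify the realizing parameter with the centre of $C_V(ik+l)$ --- is genuinely different from the paper's, and the difference matters because the step you yourself flag as ``the main obstacle'' is exactly the step the paper's architecture is designed to make automatic. The paper does not construct $\Gamma_n$ abstractly and then argue minimality; it starts on the real vein, where Burns--Hasselblatt gives that the critical orbit of the innermost period-$(2i+1)$ component is a \v{S}tefan cycle (Proposition~\ref{real}), and then transports the \emph{entire vein}, innermost components included, by quasiconformal sector surgery to the principal vein (Lemma~\ref{principal}) and by Riedl's homeomorphism $\Psi_l$ from the $(k,l)$- to the $(k,l+1)$-vein (Lemma~\ref{secondary}). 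Because these are homeomorphisms of veins respecting $\prec_V$ and carrying $C_{V_l}(ik+l)$ to $C_{V_{l+1}}(ik+l+1)$, the identification ``spiral graph $=$ Hubbard tree of the innermost period-$(ik+l)$ centre'' comes for free, and the period set then follows from the Markov-graph computation for spiral graphs. In your version, both ingredients of that identification --- that the Poirier realization of $\Gamma_n$ lands on $V$ at all, and that Hubbard trees are nested along $V$ so that the minimal tree sits innermost --- are asserted but not supplied; as written this is a genuine gap, not a routine verification.

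Part (2) has a second, independent gap. You invoke an unproven ``classification of spiral graphs'' (period determines the spiral graph, and it is realized precisely on the $(k,\,m\bmod k)$-vein), and even granting it, the conclusion ``the $(k,l')$-vein is not $V$'' is not yet a contradiction: the veins $V_l$ and $V_{l'}$ emanate from the same point and share an initial segment up to the principal Misiurewicz point $m_{p/k}$, so a component can lie on both veins, and you would still need to rule out $C_{V_l}(ik+l')$ sitting on that common segment. The paper's argument avoids all of this: if the Hubbard tree of $C_{V_l}(ik+l')$ were a spiral graph, its period set would be $\{m : m\le_k ik+l'\}$, which omits $ik+l$, while $Per(C_{V_l}(ik+l))$ omits $ik+l'$; since containment of wakes forces containment of period sets, neither wake contains the other, so the two wakes are disjoint --- impossible for two hyperbolic components on a common vein, whose wakes must be nested. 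I would encourage you to adopt this wake-nesting argument for Part (2), and for Part (1) either carry out the Hubbard-tree monotonicity/nesting statement you need in full, or switch to the surgery-transport route.
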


When it is clear from context, I will use the notation $C(n) = C_V(n)$. 

\begin{thm}[Explicit Description of the Vein Ordering]
\label{explicit}
Let $V$ be a $(k,l)$ vein.
We can obtain an explicit description of the vein ordering $\succ_V$ in some cases.

\begin{enumerate}
    \item If $k = 2l$, then the ordering of $C_V(n)$ for $n \leq_k k+l$ is explicitly determined as follows: 
\begin{center}
    \begin{align*}
    & C(k+l) \succ C(2k+l) \dots C(jk+l) \succ C((j+1)k+l) \dots\\
    & C(3k) \succ C(5k) \succ C(7k) \dots \succ C(4k) \succ C(2k) \succ C(k) \succ C(1)\\
    & \text{where the above line is given by the $k$ times the usual Sharkovsky ordering.}\\
    \end{align*}
\end{center}

\item If $k = 3l$ or $2k = 3l$, then the ordering of $C_V(n)$ for $n \leq_k k+l$ is explicitly determined as follows:
\begin{center}
    \begin{align*}
& C(k+l) \succ C(3k+2l) \succ C(4k+2l) \succ \\
& C(2k+l) \succ C(5k+2l) \succ C(6k+2l) \succ\\
& C(3k+l) \succ C(7k+2l) \succ C(8k+2l) \succ \\
& \dots \\
& C(jk+l) \succ C((2j+1)k + 2l) \succ C((2j+2)k + 2l) \succ \\
& C((j+1)k + l) \succ C((2(j+1)+1)k + 2l) \succ C((2(j+1)+2)k + 2l) \succ\\
& \dots \\
& C(3k) \succ C(5k) \succ C(7k) \dots \succ C(4k) \succ C(2k) \succ C(k) \succ C(1)\\
&\text{where the above line is given by $k$ times the usual Sharkovsky ordering.}\\
\end{align*}
\end{center}
\end{enumerate}

\end{thm}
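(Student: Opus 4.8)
The plan is to derive Theorem \ref{explicit} from Theorem \ref{ordering} by showing that, under each of the three hypotheses, the $k$-Sharkovsky order $>_k$ restricted to $S:=\{\,n\in\mathbb N(k,l):n\leq_k k+l\,\}$ is in fact a \emph{total} order --- precisely the chain displayed in the statement. Granting this, the theorem follows at once: every $n\in S$ satisfies $n\leq_k k+l$, so Theorem \ref{ordering}(2) (when $l=1$) or \ref{ordering}(3) (when $l\neq 1$) turns each relation $a>_k b$ with $a,b\in S$ into $C_V(a)\succ_V C_V(b)$; hence a total $>_k$ on $S$ forces $\succ_V$ to agree with $>_k$ under $n\mapsto C_V(n)$, and Theorem \ref{ordering}(1) records that these are exactly the components present. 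So the whole proof reduces to two facts about $>_k$: identifying the set $S$, and checking that consecutive entries of the displayed list are $>_k$-comparable (transitivity then upgrades $S$ to a chain).

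I would read $S$ off from Theorem \ref{dynamics}(1): the centre of $C_V(k+l)$ carries a spiral graph whose period set is exactly $\{n:n\leq_k k+l\}$, so $S$ is that set intersected with $\mathbb N(k,l)$. When $k=2l$ this comes out as $\{1\}\cup\{mk:m\geq 1\}\cup\{jk+l:j\geq 1\}$ --- that is, $1$, the multiples of $k=2l$, and the odd multiples of $l$ that are $\geq 3l$ --- while when $k=3l$ or $2k=3l$ the only additional residue class that survives below $k+l$ is that of $2l$, truncated to begin at $3k+2l$, giving $S=\{1\}\cup\{mk:m\geq 1\}\cup\{jk+l:j\geq 1\}\cup\{mk+2l:m\geq 3\}$. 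The hypotheses are used precisely here: they are what keep the tail to one or two residue classes with clean cut-offs, so that $S$ has a chance of being a chain; for $l/k$ of larger denominator further residue classes appear in the tail and no such closed form is available. Within each residue class the order is the decreasing chain ($k+l>_k 2k+l>_k\cdots$, and likewise for $2l$), which is part of the basic structure of $>_k$; on the multiples of $k$ one gets the ``$k$ times Sharkovsky'' order by realizing $C_V(mk)$ as a tuning of the period-$k$ component of the $p/k$ limb by the real period-$m$ window, transporting the classical ordering; and $1$ lies below everything.

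The remaining step, the \emph{gluing} of these pieces, is where I expect the real work to lie. One must show: for $k=2l$, that every odd multiple of $l$ that is $\geq 3l$ exceeds every multiple of $k$ in $>_k$; and for $k=3l$ or $2k=3l$, that $jk+l>_k (2j+1)k+2l>_k (2j+2)k+2l>_k (j+1)k+l$ for all $j\geq 1$, together with every element of the form $jk+l$ or $mk+2l$ exceeding every multiple of $k$. The mechanism I have in mind: $(2j+1)k+2l$ and $(2j+2)k+2l$ are periods of the spiral graph at the centre of $C_V(jk+l)$ but not of the spiral graph at the centre of $C_V((j+1)k+l)$; via Theorem \ref{dynamics}(1) the first statement gives $(2j+1)k+2l,(2j+2)k+2l\leq_k jk+l$ and hence (Theorem \ref{ordering}) places both components below $C_V(jk+l)$ in $\succ_V$, while the second statement, combined with the monotonicity of $Per$ along $V$ toward the tip --- an ingredient to be cited or proved separately, classical in the real case and expected in general from the growth of Hubbard trees --- together with the fact that each period is realized at its own centre, places both components above $C_V((j+1)k+l)$; their mutual order is fixed by $(2j+1)k+2l>_k(2j+2)k+2l$. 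The technical heart is therefore the explicit computation of the period sets $\{n:n\leq_k jk+l\}$ of the growing spiral graphs --- tracking exactly which orbit lengths and rotation data they force --- and it is there that the arithmetic of $k=2l$, $k=3l$, $2k=3l$ is genuinely needed. Once every consecutive comparison in the list has been verified in this way, $S$ is a $>_k$-chain, $\succ_V$ is determined, and Theorem \ref{explicit} follows.
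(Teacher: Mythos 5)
Your plan for part (1) is sound and is exactly the paper's (one-line) argument: when $k=2l$ the residue class of $2l$ coincides with that of $0$, so $Per(C_V(k+l))=\{1\}\cup k\mathbb N\cup\{jk+l: j\ge 1\}$ meets only two residue classes mod $k$, the restriction of $>_k$ to it is genuinely total, and Theorem \ref{ordering} transports the chain to $\succ_V$.

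For part (2), however, the headline premise of your first paragraph --- that $>_k$ restricted to $S$ is a total order whose chain is the displayed list --- is false, so the theorem does \emph{not} ``follow at once'' from Theorem \ref{ordering}. Concretely, the consecutive pair $(2j+2)k+2l$ and $(j+1)k+l$ is $>_k$-incomparable: writing out rule (3) of the definition, one finds (say for $k=3l$) that $jk+l>_k mk+2l$ iff $m\ge 2j+1$ and $mk+2l>_k jk+l$ iff $j\ge 2m+2$, and neither holds for $m=2j+2$ against $j+1$. (For $k=3$, $l=1$, $j=1$: the periods $14$ and $7$ are incomparable under $>_3$, yet the theorem asserts $C(14)\succ_V C(7)$.) So ``checking that consecutive entries are $>_k$-comparable'' is a step that would fail, and transitivity cannot upgrade $S$ to a chain. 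The correct mechanism is the one you only sketch in your final paragraph, and it is precisely the paper's proof: since the centres of $C_V(jk+l)$ and $C_V((j+1)k+l)$ are spiral graphs, $Per(C_V(jk+l))\setminus Per(C_V((j+1)k+l))=\{jk+l,\,(2j+1)k+2l,\,(2j+2)k+2l\}$; combining membership/non-membership in these period sets with the nestedness of wakes along a vein (which makes $\succ_V$ total and gives $n\in Per(C_V(m))\Leftrightarrow C_V(m)\succeq_V C_V(n)$ --- this is the ``monotonicity'' you defer, already available from the orbit-portrait/wake discussion and Proposition \ref{real}) sandwiches both residue-$2l$ components strictly between $C_V(jk+l)$ and $C_V((j+1)k+l)$; their mutual order is then the one genuine $>_k$ relation $(2j+1)k+2l>_k(2j+2)k+2l$. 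You should discard the totality reduction for this case and promote that sandwiching argument to the main proof; as written, the proposal's logical frame would leave the links $C((2j+2)k+2l)\succ_V C((j+1)k+l)$ unproved.
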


\subsection{Background}

The problem of orbit forcing on trees has been described by Baldwin in the 1980s \cite{B}. In his setting, he considers a continuous map on a tree, and what can be said about the set of periods of periodic orbits of such a map. In his result, he describes the correspondence between maps on $k_0$-stars with a union of tails of $k$-Sharkovsky orderings, where $2 \leq k \leq k_0$.

In order to understand complex dynamics of quadratic maps using combinatorial descriptions of such dynamics on trees, Bruin and Schleicher have developed a correspondence between the dynamics on Hubbard trees and combinatorial objects such as internal addresses and kneading sequences \cite{BS}.

This paper is a novel application of Baldwin's orbit forcing results to the complex dynamics setting. We will not use the internal address approach in this result.

\section{Definitions and Notation}
\subsection{Complex Dynamics}
Consider the family of quadratic maps $f_c(z) = z^2 + c$, where $c\in \mathbb{C}$. The Mandelbrot set $\mathcal{M} \subseteq \mathbb{C}$ consists of all parameters $c$ such that the orbit of the critical point is bounded.

The external of the Mandelbrot set $\mathbb{C} \setminus \mathcal{M}$ can be uniformized to the external of the closed disk $\mathbb{C} \setminus \mathbb{\overline D}$ so that $\frac{\Phi(c)}{c}=1$ as $c \rightarrow \infty$ by the unique conformal isomorphism $\Phi: \mathbb{C} \setminus \mathcal{M} \rightarrow \mathbb{C} \setminus \mathbb{\overline D}$. Then the pull back of rays on the external of the disk are the external rays of the Mandelbrot set, or parameter rays, and we denote as $R^\theta =  \Phi^{-1}(\{re^{i\theta} | r >1\})$.

We say that a parameter ray $R^\theta$ lands at a point $c$ on the boundary of the Mandelbrot set if $\lim_{r\rightarrow 1^+} \Phi^{-1}(re^{i\theta}) = c$. Let $R^{\theta+}$ and $R^{\theta-}$ be external rays on the Mandelbrot set such that they land at a common point $c \in \partial \mathcal{M}$. Then $R^{\theta+}\cup R^{\theta-}\cup {c}$ divides $\mathbb{C}$ into two components.

Given a parameter $c \in \mathbb{C}$ we can define the filled Julia set $K_c$ as the set of all points $z$ in the dynamical plane such that the orbit of $z$ under $f_c$ remains bounded. For parameters $c \in \mathcal{M}$, $K_c$ is connected and simply connected, so its external $\mathbb{\widehat C} \setminus K_c$ can be uniformized to the external of the closed disk $\mathbb{\widehat C} \setminus \mathbb{\overline D}$, and there is a unique conformal isomorphism $\Psi_c: \mathbb{\widehat C} \setminus K_c \rightarrow \mathbb{\widehat C} \setminus \mathbb{\overline D}$ such that the dynamics of $f_c$ on the $\mathbb{C} \setminus K_c$ commutes with the $z \mapsto z^2$ on the external of the disk. Then the pull back of rays on the external of the disk are the external rays of the filled Julia set, or dynamical rays, and we denote as $R_c^\theta =  \Psi^{-1}(\{re^{i\theta} | r >1\})$.

For a parameter $c$, the Julia set $J_c = \partial K_c$ is the boundary of the filled Julia set. We say that a dynamical ray $R_c^\theta$ lands at a point $z$ on the Julia set $J_c$ if $\lim_{r\rightarrow 1^+} \Psi_c^{-1}(re^{i\theta}) = z$.

\subsubsection{Orbit Portrait}
An orbit portrait of a periodic orbit $\{z_1, \dots, z_n\}$ for $f_c$, is a set of sets of angles $A_1, \dots, A_n$, where $A_i$ are all the angles $\theta_1, \dots, \theta_k$ such that the dynamical rays $R_c^{\theta_1}, \dots R_c^{\theta_k}$ land at $z$. The valence of a point $z_i$ in the orbit is the number of dynamical rays landing at $z_i$, and since only points with valence $\geq 2$ can be on the Hubbard tree $H_c$, we are only interested in orbit portraits with valence $\geq 2$.

A hyperbolic component $C$ of the Mandelbrot set is a connected component of the interior of the Mandelbrot set, such that every parameter in $C$ has an attracting periodic cycle.

Suppose that the parameter rays $R^{\theta+}$ and $R^{\theta-}$ land at a root $r \neq \frac{1}{4}$ of a hyperbolic component $C$. Moreover, suppose that $R^{\theta+}\cup R^{\theta-}\cup {r}$ divides $\mathbb{C}$ into two connected open components, $W$, and $\mathbb{C} \setminus \overline W$ such that $C \supseteq W$ and $\mathbb{C} \setminus \widehat W \supseteq C(1)$. Then we say $W = W_C$ is the wake of the hyperbolic component $C$.

A parameter $c \in \mathbb{C}$ has a periodic orbit with orbit portrait $P$ if and only if $c \in W\cup\{r\}$. If $P$ and $Q$ are distinct orbit portraits, then the wakes $W_P$ and $W_Q$ are either disjoint or strictly nested (ie, either $W_P \cap W_Q = \emptyset, \overline W_P \subseteq W_Q$, or $\overline W_Q \subseteq W_P$). If $c_1$ and $c_2$ are parameters for which $V$ is a vein between them, then the wakes of roots of hyperbolic components along $V$ are all nested.

Suppose $f_c$ has an orbit with portrait $P$. Then there is hyperbolic component $C$ with root $r_P$ such that $f_{r_P}$ has a parabolic orbit with portrait $P$, and there are parameter rays $R^{\theta+}, R^{\theta-}$ that land at $r_P$. Moreover these rays divide $\mathbb{C}$ into $W_P$ and $\mathbb{C}\setminus W_P$, where $W_P \supseteq C$ is the wake of the orbit portrait $P$.

A hyperbolic component $C$ is visible from $C'$ if $V = [C, C']$ is a vein, $C \succ_V C'$, and all components $C'' \in (C, C')$ have the property that $per(C'') > per(C)$. At each hyperbolic component $C$ with internal angle $p/q$, there is a tree $T_{p/q}$ consisting of all hyperbolic components visible from $C$ in the $p/q$ sub-wake of $C$. There are finitely many such hyperbolic components on each $T_{p/q}$, 

\subsubsection{Quasiconformal Surgery of Sectors}
A vein is a continuous embedding of an arc inside $\mathcal{M}$. Denote the real vein $V_R = \mathcal{M} \cap \mathbb{R}$.

Let $k \geq 3$ and let $1 \leq p \leq k-1$ be relatively prime to $k$.\\

Let $c \in V_R$ be a parameter, and consider the dynamical plane of $f_c$. We will cut the dynamical plane along the rays at the $\alpha$ fixed point. The sector that contains the critical point will be denoted as the critical sector, $S_{c,R}$ and the other sector will be the non-critical sector $S_{n,R}$. 

Consider the plane divided into $k$ sectors labelled $S_0, S_1, \dots S_{k-1}$ at angles $0, p/k, 2p/k, \dots, (k-1)p/k$ respectively. Embed the critical sector in $S_0$ via the map $\phi_0: S_{c,R} \rightarrow S_0$.

For each $1 \leq i \leq k-1$, embed the non-critical sector in $S_i$ via the map $\phi_i: S_{n,R} \rightarrow S_i$.

We will duplicate the dynamics of the non-critical sector $k-2$ times, at the sectors $S_{1}, S_{2}, \dots, S_{k-1}$.

The new map under the surgery $f_{\Phi_{p,k}(c)}$ will be defined by

$$f_{\Phi_{p,k}(c)} = \phi_{i+1} \circ f \circ \phi_i^{-1}$$

on the $i$-th sector for $i = 1, \dots, k-1$ and we take $\phi_{k} = \phi_0$. For $i=0$, define $f_{\Phi_{p,k}(c)}$ to be $\phi_{0} \circ f \circ \phi_0^{-1}|_{f^{-1}(S_{c,R})}$ and $\phi_{1} \circ f \circ \phi_0^{-1}|_{f^{-1}(S_{n,R})}$.

That is, $f_{\Phi_{p,k}(c)}$ duplicates the dynamics of the non critical sector $k-1$ times and maps $S_1 \rightarrow S_2 \rightarrow \dots \rightarrow S_{k-1} \rightarrow S_{0}$.

Define $\Phi_{p,k}$ to me the map from $V_R$ to its image, which is a principal $p/k$-vein. This is an embedding of $V_R$.

\subsection{Sharkovsky Ordering}
\subsubsection{Interval Maps}
An interval map is a continuous function $f:I\rightarrow I$ on a closed interval. The set of periods of $f$ is $Per(f) : = \{n | f \text{ has a periodic point of exact period } n \}$.

The \emph{Sharkovsky ordering} $>_2$ is an ordering on $\mathbb{N}$ that describes orbit forcing and realization of interval maps. It is given by $3 >_2 5 >_2 7 >_2 9 >_2 11 >_2 \dots 2\times 3 >_2 2\times 5 >_2 \dots 2^n\times 3 >_2 2^n\times 5 >_2 \dots >_2 2^n >_2 2^{n-1} >_2 \dots >_2 2 >_2 1$.

A tail of the $>_2$ ordering is a set $S \subseteq \mathbb{N}$ such that whenever $s \in S$ and $t \in \mathbb{N} \setminus S$ then $t >_2 s$. Every interval map $f$ has its set of periods $Per(f)$ given by a tail of $>_2$, and every tail of this ordering is realized as the set of periods of some interval map.

Suppose $n$ is odd and $p_0 \mapsto p_1 \mapsto \dots \mapsto p_n = p_0$ is a period $n$ cycle of $f: I \rightarrow I$. Then $\{p_1, \dots, p_n\}$ is a \emph{\v{S}tefan cycle} if the points in the cycle occur in the following configuration along $I$ or in the reverse orientation: $p_{n-1} < p_{n-3} < \dots < p_4 < p_2 < p_0 = p_n < p_1 < p_3 < \dots < p_{n-4} < p_{n-2}$. If $f$ has a period $n$ orbit, then it has a period $n$ orbit that is a \v{S}tefan cycle.

\subsubsection{Star Maps}
For $k \geq 3$, a $k$-star $T$ is a topological tree with $k$ leaves joined to one vertex of degree $k$.

The $k$-Sharkovsky ordering is a partial ordering on $\mathbb{N}$ that describes the orbit forcing and realization of continuous maps on stars. The $k$-Sharkovsky ordering is a partial ordering defined by:

$\forall n, m in S_k$
\begin{enumerate}
    \item if $n > 1$ then $n >_k 1$,
    \item if $n, m \equiv 0 \mod k$ and $n/k >_2 m/k$, then $n >_k m$,
    \item if $n \not \equiv 0 \mod k$ and $m = in + jk$ for some $i\geq 0, j \geq 1$ then $n >_k m$.
\end{enumerate}

A tail of the $>_k$ ordering is a set $S \subseteq \mathbb{N}$ such that whenever $s \in S$ and $t \in \mathbb{N} \setminus S$ then $t \not \leq_k s$. Every continuous map on a $k$-star has its set of periods given by a union of tails of $>_i$ for $i = 2, \dots, k$, and every such union of tails is realized as the set of periods of a continuous map on a $k$-star.

Suppose $T$ is a $k$-star with a continuous map $f: T \rightarrow T$ that fixes the central degree $k$ vertex $\alpha$ of $T$, such that $p_0 \mapsto p_1 \mapsto \dots \mapsto p_n=p_0$ is a period $n$ cycle of $f$, where $n = ik + j$ for some $i \geq 1$ and $1 \leq j \leq n-1$.
Then $\{p_1, \dots, p_n\}$ is a \emph{spiral cycle} if the points in the cycle occur in an outward spiral on all $k$ arms of $T$ as evenly as possible in the following way: Denote the leaves of $T$ as $L_1, \dots, L_k$, with $L_i = [\alpha, v_i]$. Then the vertices $p_j \in L_i$ if $j \equiv i \mod k$. Moreover if $j \leq n-k$ then $p_j \in [\alpha, p_{j+k})$. 

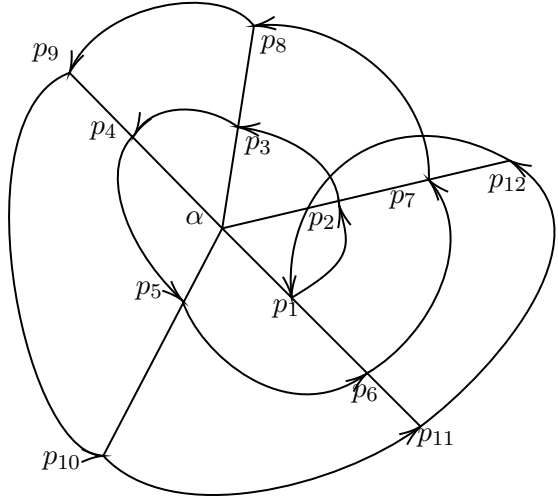
\begin{figure}
\centering

\begin{tikzpicture}[x=0.75pt,y=0.75pt,yscale=-1,xscale=1]

\draw    (100,115) -- (200,215) ;
\draw    (100,115) -- (245,80.81) ;
\draw    (100,115) -- (40,229.81) ;
\draw    (100,115) -- (116,12.81) ;
\draw    (23,36.5) -- (100,115) ;
\draw    (135,150) .. controls (167.01,131.08) and (164.21,123.93) .. (159.45,105.27) ;
\draw [shift={(159,103.5)}, rotate = 75.96] [color={rgb, 255:red, 0; green, 0; blue, 0 }  ][line width=0.75]    (10.93,-3.29) .. controls (6.95,-1.4) and (3.31,-0.3) .. (0,0) .. controls (3.31,0.3) and (6.95,1.4) .. (10.93,3.29)   ;
\draw    (173,188.2) .. controls (205.51,168.99) and (229.47,121.07) .. (205.34,91.92) ;
\draw [shift={(204.2,90.6)}, rotate = 48.12] [color={rgb, 255:red, 0; green, 0; blue, 0 }  ][line width=0.75]    (10.93,-3.29) .. controls (6.95,-1.4) and (3.31,-0.3) .. (0,0) .. controls (3.31,0.3) and (6.95,1.4) .. (10.93,3.29)   ;
\draw    (200,215) .. controls (239.6,185.3) and (299.78,114.93) .. (246.65,81.8) ;
\draw [shift={(245,80.81)}, rotate = 30.27] [color={rgb, 255:red, 0; green, 0; blue, 0 }  ][line width=0.75]    (10.93,-3.29) .. controls (6.95,-1.4) and (3.31,-0.3) .. (0,0) .. controls (3.31,0.3) and (6.95,1.4) .. (10.93,3.29)   ;
\draw    (159,103.5) .. controls (158.03,81.07) and (132.33,67.21) .. (109.73,64.12) ;
\draw [shift={(108,63.91)}, rotate = 6.43] [color={rgb, 255:red, 0; green, 0; blue, 0 }  ][line width=0.75]    (10.93,-3.29) .. controls (6.95,-1.4) and (3.31,-0.3) .. (0,0) .. controls (3.31,0.3) and (6.95,1.4) .. (10.93,3.29)   ;
\draw    (204.2,90.6) .. controls (206.17,44.3) and (158.46,8.59) .. (117.85,12.6) ;
\draw [shift={(116,12.81)}, rotate = 352.62] [color={rgb, 255:red, 0; green, 0; blue, 0 }  ][line width=0.75]    (10.93,-3.29) .. controls (6.95,-1.4) and (3.31,-0.3) .. (0,0) .. controls (3.31,0.3) and (6.95,1.4) .. (10.93,3.29)   ;
\draw    (108,63.91) .. controls (91.59,51.93) and (66.81,51.52) .. (57.01,65.88) ;
\draw [shift={(56,67.5)}, rotate = 299.36] [color={rgb, 255:red, 0; green, 0; blue, 0 }  ][line width=0.75]    (10.93,-3.29) .. controls (6.95,-1.4) and (3.31,-0.3) .. (0,0) .. controls (3.31,0.3) and (6.95,1.4) .. (10.93,3.29)   ;
\draw    (116,12.81) .. controls (99.26,-9.35) and (38.85,1.17) .. (23.66,34.94) ;
\draw [shift={(23,36.5)}, rotate = 291.8] [color={rgb, 255:red, 0; green, 0; blue, 0 }  ][line width=0.75]    (10.93,-3.29) .. controls (6.95,-1.4) and (3.31,-0.3) .. (0,0) .. controls (3.31,0.3) and (6.95,1.4) .. (10.93,3.29)   ;
\draw    (56,67.5) .. controls (38.27,83.26) and (48.68,121.34) .. (78.62,150.19) ;
\draw [shift={(80,151.5)}, rotate = 223.09] [color={rgb, 255:red, 0; green, 0; blue, 0 }  ][line width=0.75]    (10.93,-3.29) .. controls (6.95,-1.4) and (3.31,-0.3) .. (0,0) .. controls (3.31,0.3) and (6.95,1.4) .. (10.93,3.29)   ;
\draw    (23,36.5) .. controls (-33.15,57.19) and (-1.97,224.37) .. (38.16,229.68) ;
\draw [shift={(40,229.81)}, rotate = 180.44] [color={rgb, 255:red, 0; green, 0; blue, 0 }  ][line width=0.75]    (10.93,-3.29) .. controls (6.95,-1.4) and (3.31,-0.3) .. (0,0) .. controls (3.31,0.3) and (6.95,1.4) .. (10.93,3.29)   ;
\draw    (80,151.5) .. controls (87.92,177.24) and (132.1,217.39) .. (171.8,189.08) ;
\draw [shift={(173,188.2)}, rotate = 143.13] [color={rgb, 255:red, 0; green, 0; blue, 0 }  ][line width=0.75]    (10.93,-3.29) .. controls (6.95,-1.4) and (3.31,-0.3) .. (0,0) .. controls (3.31,0.3) and (6.95,1.4) .. (10.93,3.29)   ;
\draw    (40,229.81) .. controls (74.65,267.12) and (158.31,245.44) .. (198.79,215.9) ;
\draw [shift={(200,215)}, rotate = 143.13] [color={rgb, 255:red, 0; green, 0; blue, 0 }  ][line width=0.75]    (10.93,-3.29) .. controls (6.95,-1.4) and (3.31,-0.3) .. (0,0) .. controls (3.31,0.3) and (6.95,1.4) .. (10.93,3.29)   ;
\draw    (245,80.81) .. controls (181.64,44.86) and (130.04,92.53) .. (134.84,148.31) ;
\draw [shift={(135,150)}, rotate = 263.94] [color={rgb, 255:red, 0; green, 0; blue, 0 }  ][line width=0.75]    (10.93,-3.29) .. controls (6.95,-1.4) and (3.31,-0.3) .. (0,0) .. controls (3.31,0.3) and (6.95,1.4) .. (10.93,3.29)   ;

\draw (124,149.4) node [anchor=north west][inner sep=0.75pt]    {$p_{1}$};
\draw (142,105.4) node [anchor=north west][inner sep=0.75pt]    {$p_{2}$};
\draw (110,67.31) node [anchor=north west][inner sep=0.75pt]    {$p_{3}$};
\draw (32,59.4) node [anchor=north west][inner sep=0.75pt]    {$p_{4}$};
\draw (55,140.4) node [anchor=north west][inner sep=0.75pt]    {$p_{5}$};
\draw (164,192.4) node [anchor=north west][inner sep=0.75pt]    {$p_{6}$};
\draw (183,94.4) node [anchor=north west][inner sep=0.75pt]    {$p_{7}$};
\draw (118,16.21) node [anchor=north west][inner sep=0.75pt]    {$p_{8}$};
\draw (3,20) node [anchor=north west][inner sep=0.75pt]    {$p_{9}$};
\draw (8,226.4) node [anchor=north west][inner sep=0.75pt]    {$p_{10}$};
\draw (197,214.4) node [anchor=north west][inner sep=0.75pt]    {$p_{11}$};
\draw (233,86.4) node [anchor=north west][inner sep=0.75pt]    {$p_{12}$};
\draw (80,105) node [anchor=north west][inner sep=0.75pt]    {$\alpha $};
\end{tikzpicture}

\caption{Schematic of a $5$-star with a spiral cycle of period 12.}
\end{figure}

Suppose $T$ is tree with vertices$\{p_1, \dots, p_n\} \cup \{\alpha\}$. Then $f: T \rightarrow T$ is a \emph{spiral graph} if $f$ maps $\{p_1, \dots, p_n\}$ in a spiral cycle, $f$ fixes $\alpha$, and $f$ maps every edge $[v,w]$ of $T$ homeomorphically to the path $[f(v), f(w)]$.

\subsubsection{Markov Graphs}

Suppose $T=(V,E)$ is a $k$-star with a fixed central vertex $\alpha$ and $f:T \rightarrow T$ maps vertices to vertices and maps edges to unions of edges homeomorphically. We can define the Markov graph of $f$ (or equivalently of $T$) as the directed graph $G_T = (E,A)$, where there is an arc $A_{i,j} = (E_i, E_j)$ from $E_i$ to $E_j$ if $f(E_i) \supseteq E_j$.

A nonrepetitive loop of length $m$ is a loop in the Markov graph $L = E_{1,0}, E_{i_1}, E_{i,2}, \dots, E_{i,m} = E_{i,0}$ such that is not the union of $l$ repeats of a smaller loop.

Suppose moreover that $f: T\rightarrow T$ maps $p_i$ to $p_{i+1}$ for all $i$, and fixes $\alpha$ is a spiral graph with $k$ leaves and $p$ vertices, where $p \equiv i \mod k$ for some $0 < i \leq p-1$. Denote the spiral cycle that defines $f$ to be $p_1, p_2, p_3, \dots, p_n = p_0$ where $f(p_i) = p_{i+1}$ with $p_1, \dots p_k$ the vertices closest to $\alpha$ on each branch.

Define the edge $I_j$ to be the unique edge through vertex $p_j$ that is closest to $\alpha$. Then the Markov graph of $f$ consists of the following $p$-cycle: $I_0 \rightarrow I_1 \rightarrow I_2 \rightarrow \dots \rightarrow I_{p-1} \rightarrow I_{p}=I_{0}$ and arcs $I_p \rightarrow I_i, I_p \rightarrow I_{k+i}, \dots, I_p \rightarrow I_{p-k}$, $I_p \rightarrow I_1$, and $I_k \rightarrow I_1$.

Suppose $k \neq m \neq 1$. Then $f$ has a point of period $m$ if $G_T$ has a nonrepetitive loop of length $m$ that does not consist purely of edges adjacent to $\alpha$.  $G_T$ has a loop of length $m$ if $f$ has a point of period $m$.

Suppose $T$ is a $k$-star with vertices $\alpha$ of degree $k$ and $p_0, \dots, p_{n-1}$ of degree at most 2. Moreover suppose $f: T \rightarrow T$ has a spiral cycle $p_0 \mapsto p_1 \mapsto \dots \mapsto p_n = p_0$ of period $n > 1$, $n \not \equiv 0 \mod k$, and fixed point $\alpha$ of degree $k$, and $f$ maps any edge between these vertices homeomorphically. Then $Per(f) = \{m | m \leq_k n\}$.

\section{Results}

First, we will show that the ordering of $C_V(n)$ along the real vein is the $2$-Sharkovsky ordering. In this section, when referring to the dynamics of $f_c$ where $c$ is the centre of a hyperbolic component $C$, I will sometimes use $C$ instead of $f_c$, such as in the context of $Per(C)$.

\begin{prop}[The Real Vein]
\label{real}
Let $V = V_R = [-2,0]$. 

\begin{enumerate}

\item $C_V(n)$ exists for all $n \in \mathbb{N}$.

\item $C_V(n) \succ_{V} C_V(m)$ iff $n >_2 m$. 

\item The set of periods of any $T_{C_V(n)}$ is $\{m | m \leq_2 n\}$. 

\item If $n$ is odd, then $C_V(n)$ is a \v{S}tefan cycle.

\end{enumerate}
\end{prop}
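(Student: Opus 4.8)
The plan is to leverage the classical structure of the real quadratic family and translate it into the vein language. For part (1), I would recall that the real Mandelbrot set $\mathcal{M} \cap \mathbb{R} = [-2, 1/4]$ carries hyperbolic components of every period $n \geq 1$ (this is a standard consequence of the fact that the real logistic/quadratic family realizes every Sharkovsky tail, hence every period), and that for each $n$ there are finitely many real hyperbolic components of period $n$. Among these, take the one whose root is largest (closest to $0$ along $V_R$, i.e.\ smallest in absolute value as a negative number); this is $C_V(n)$, and it exists for all $n$. For part (4), I would invoke the Štefan cycle statement from the Interval Maps subsection: if $f_c$ has a real periodic point of odd period $n$, it has one forming a Štefan cycle; applying this at the centre $c$ of $C_V(n)$ (where the critical orbit has exact period $n$) and using that the Hubbard tree of a real map is an interval on which the critical orbit sits in Štefan configuration, we conclude $C_V(n)$ is a Štefan cycle.

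For parts (2) and (3), the key is the correspondence between the combinatorial rotation/forcing order on veins and Sharkovsky forcing on the interval. I would argue as follows. The Hubbard tree of a real quadratic map is a subinterval of $\mathbb{R}$, and $f_c$ restricted to it is an interval map; so $Per(f_c) = \{m : m \leq_2 n\}$ when $c$ is the centre of a component whose critical orbit has period $n$ that is ``dominant'' in the Sharkovsky sense — this follows because the kneading data of the real map with a superattracting $n$-cycle of maximal parameter forces exactly the tail of $>_2$ below $n$. Then, using the wake-nesting statement for veins (wakes of roots of hyperbolic components along a vein are all nested) together with the orbit-portrait description, $C_V(n) \succ_V C_V(m)$ holds iff the wake of $C_V(m)$ is contained in the wake of $C_V(n)$, iff $f$ at the centre of $C_V(n)$ already exhibits a period-$m$ orbit portrait, iff $m \leq_2 n$. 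Part (3) then follows from the definition of $T_{C}$ as the visible components and the fact that visibility of a period-$m$ component from a period-$n$ component matches $m \leq_2 n$.

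I expect the main obstacle to be part (2), specifically the ``only if'' direction and making the bridge between the topological/combinatorial forcing order $\succ_V$ and the purely numerical order $>_2$ fully rigorous rather than folklore. The subtlety is that $\succ_V$ is defined via ``closer to the main cardioid,'' and one must show this linear order on the components $\{C_V(n)\}$ coincides exactly with restricting $>_2$ — in particular that no period-$m$ component with $m \not\leq_2 n$ can appear below $C_V(n)$ on $V_R$, which uses that the real Hubbard tree forces precisely a $>_2$-tail (Sharkovsky's theorem in its realization form) and that the maximal-parameter choice of $C_V(n)$ makes its forced set of periods exactly $\{m : m \leq_2 n\}$. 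I would handle this by citing Sharkovsky's theorem together with the monotonicity of kneading sequences in the real family, then reading off the order of first appearance of each period as one moves from $0$ toward $-2$.
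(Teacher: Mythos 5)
Your treatment of parts (1)--(3) is essentially the paper's own argument: the Hubbard tree of a real parameter is an interval, Sharkovsky's theorem gives the forcing direction, nested wakes along the vein translate forcing into the order $\succ_V$, and the choice of $C_V(n)$ as the period-$n$ component closest to the main cardioid gives the converse inclusion $Per \subseteq \{m : m \leq_2 n\}$. (The paper closes the ``only if'' of (2) slightly more cheaply than you propose --- not via monotonicity of kneading sequences but by observing that an order-preserving injection between two total orders reflects the order --- but both routes are sound. Note also that $T_{C_V(n)}$ in part (3) is the Hubbard tree of the centre, not the tree of visible components; your substantive argument for (3) is nevertheless the right one.)

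Part (4) has a genuine gap. The statement you invoke --- if $f$ has a periodic point of odd period $n$ then it has \emph{some} period-$n$ orbit that is a \v{S}tefan cycle --- only produces a \v{S}tefan representative among the period-$n$ orbits; it does not identify that representative with the critical orbit, and your phrase ``the critical orbit sits in \v{S}tefan configuration'' is precisely the conclusion being asserted rather than derived. A map can have many period-$n$ orbits of which only one is \v{S}tefan. The paper closes this by using the stronger Burns--Hasselblatt form of the theorem: if $n = 2i+1$ is odd and $Per(f) = \{m : m \leq_2 n\}$, i.e.\ $n$ is the $>_2$-maximum of the period set, then \emph{every} period-$n$ orbit of $f$ is a \v{S}tefan cycle. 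Part (3) supplies exactly the hypothesis $Per(f_c) = \{m : m \leq_2 n\}$ at the centre of $C_V(n)$, so the critical orbit in particular is \v{S}tefan. You should replace your citation of the existence version with this ``every maximal odd orbit is \v{S}tefan'' version and feed in part (3) explicitly.
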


\begin{proof}

\begin{enumerate}
\item If $c \in V$ then the Hubbard tree of $c$ is a line segment. Since the airplane $c$ is a parameter of period 3 in $V_R$, $T_c$ contains periodic orbits of all periods. Since periodic orbits in $T_c$ correspond to orbit portraits of distinct wakes nested along $V$. For every $n$ there is a unique parameter of period $n$ that is the root of the wake of the orbit portrait of period $n$ closest to the main cardioid along $V$. So $C_V(n)$ exists for every $n$.

\item By definition, $T_{C_V(n)}$ contains a periodic point of period $n$, so it contains a periodic point of period $m$ for all $m \leq_2 n$. Corresponding to each of these periodic orbits of period $m$ is its wake. Note that for the wake $W_{C_V(n)}$ of $C_V(n)$ is contained in the wake $W_{C_V(m)}$ of $C_V(m)$ iff $C_V(n) \succ_V C_V(m)$. Since these wakes are nested along $V$, let $W_{C}$ be the wake of period $m$ along $V$ that contains all others, where $C$ is the hyperbolic component corresponding to the root of $W_C$. Note that there is no other wake containing $W_{C}$ that corresponds to a period $m$ orbit, so $C = C_V(m)$. Since $m <_2 n$ implies $C_V(m) \prec_V C_V(n)$, and $<_2$ is a total ordering, the implication goes both ways.

\item Every periodic orbit of $T_{C_V(n)}$ corresponds to a wake of a hyperbolic component of that period that contains $C_V(n)$. For any $m >_2 n$, since $C_V(m) \succ_V C_V(n)$, all of the wakes of period $m$ orbits are contained in $W_{C_V(n)}$, thus there are no wakes of period $m >_2 n$ that contain $C_V(n)$. So the set of periods of $T_{C_V(n)}$ is $\{m | m \leq_2 n\}$.

\item By a theorem of Burns and Hasselblatt \cite{BH}, if $f$ is an interval map with $Per(f) = \{m | m \leq_2 2i+1\}$ where $i \geq 0$ then $f$ every orbit of period $2i+1$ is a \v{S}tefan cycle.
Consider $T_{C_V(2i+1)}$, since $Per(T_{C_V(2i+1)}) = \{m | m \leq_2 2i+1\}$ its critical orbit is a \v{S}tefan cycle of this period.

\end{enumerate}
\end{proof}

Next, we will use a lemma to derive the statement for principal veins of $p/k$-limbs. We will use quasiconformal surgery of sectors to obtain an embedding of $V_R$ into any $p/k$-principal vein.

\begin{lem} [From Real to Principal Vein]
\label{principal}
Let $V$ be a $(k,1)$-vein on the $p/k$-limb. Then $T(c)$ is a $k$-star for all $c \in V\setminus C_V(1)$. Moreover $C_V(ik+1)$ are spiral graphs for all $i \geq 0$.

\end{lem}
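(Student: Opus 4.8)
The plan is to transport the structure of the real vein through the quasiconformal surgery $\Phi_{p,k}\colon V_R\to V$ built above, which is an order-preserving embedding of $V_R$ onto the principal $(k,1)$-vein $V$ and, being induced by a quasiconformal conjugacy on the relevant dynamical planes, carries Hubbard trees and the combinatorics of critical orbits. So I would write $c=\Phi_{p,k}(\tilde c)$ with $\tilde c\in V_R$ and read off what the surgery does to the interval Hubbard tree $T(\tilde c)$ and, when $\tilde c$ is a superattracting parameter, to its \v{S}tefan cycle.

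For the first assertion, note that if $c\in V\setminus C_V(1)$ then $c$ lies in the $p/k$-limb of $\mathcal M$, so the corresponding $\tilde c$ lies in the $1/2$-limb of $\mathcal M$ (the surgery sends the main-cardioid side to the main-cardioid side). Then $T(\tilde c)$ is an arc, and the $\alpha$ fixed point of $f_{\tilde c}$ has combinatorial rotation number $1/2$, hence lies in the interior of $T(\tilde c)$; the two dynamical rays landing at $\alpha$ cut $T(\tilde c)$ into the critical sub-arc (inside $S_{c,R}$) and the non-critical sub-arc (inside $S_{n,R}$). The surgery retains a single copy of the critical sub-arc in $S_0$ and glues $k-1$ further copies of the non-critical sub-arc, one in each of $S_1,\dots,S_{k-1}$, all meeting at the single point $\alpha$; away from these arcs the conjugacy is conformal, so no further branching appears and $\alpha$ becomes a fixed point of rotation number $p/k$ of valence exactly $k$. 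Hence $T(c)$ is $k$ arcs emanating from the branch point $\alpha$, i.e.\ a $k$-star.

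For the second assertion I would argue that $C_V(ik+1)=\Phi_{p,k}\big(C_{V_R}(2i+1)\big)$ for $i\ge 1$ (the case $i=0$ being the main cardioid and vacuous). By Proposition~\ref{real}(4) the critical orbit $q_0=0\mapsto q_1\mapsto\cdots\mapsto q_{2i+1}=q_0$ of the centre of $C_{V_R}(2i+1)$ is a \v{S}tefan cycle, and from its ordering along the interval and the position of $\alpha$ one sees that exactly the $i$ points $q_1,q_3,\dots,q_{2i-1}$ lie in $S_{n,R}$ and the remaining $i+1$ points lie in $S_{c,R}$. Under the surgery each visit to the non-critical sector is spread over the sectors $S_1\to S_2\to\cdots\to S_{k-1}$ before the orbit returns to $S_0$, so the image critical orbit has period $(i+1)+i(k-1)=ik+1$; its successive points occupy the $k$ arms of the $k$-star cyclically, the $m$-th point lying on the arm indexed by $m\bmod k$, and each return to a given arm is strictly farther from $\alpha$ than the preceding one because the \v{S}tefan cycle spirals outward on each side of $\alpha$. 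So, after relabelling the arms, the image critical orbit is a spiral cycle, $\alpha$ is fixed, and $f_c$ sends each edge homeomorphically onto a path; that is, $f_c$ is a spiral graph. To see the image is indeed the component $C_V(ik+1)$ closest to the main cardioid, I would use that $\Phi_{p,k}$ is an order-preserving bijection of hyperbolic components: any period-$(ik+1)$ component strictly closer to the main cardioid on $V$ is the image of some $\tilde C\prec_{V_R}C_{V_R}(2i+1)$, hence of real period $m<_2 2i+1$, and — since the non-critical-sector visits of a critical orbit are isolated, so their number is at most $\lfloor m/2\rfloor$ and is attained only by \v{S}tefan cycles — a count modulo $k$ of the surgery period shows this image has period either $>ik+1$ or $\not\equiv 1\pmod k$, in any case not $ik+1$.

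The hard part will be making the surgery bookkeeping in the second assertion fully rigorous: confirming that the critical orbit's sector itinerary is exactly the alternating pattern claimed, that the $k-1$ inserted copies are threaded together so that the image orbit really visits each arm once per turn in outward order, and — the genuine crux — the exclusion of parasitic period-$(ik+1)$ components nearer the main cardioid, which hinges on a complete description of which real periodic critical orbits admit a sector itinerary whose surgery image has period $\equiv 1\pmod k$. Minor care is also needed at the root of the $p/k$-limb on $V$, where the Hubbard tree degenerates.
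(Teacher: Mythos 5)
Your proposal follows essentially the same route as the paper: transport the \v{S}tefan cycle of $C_{V_R}(2i+1)$ through the sector surgery $\Phi_{p,k}$, count $(i+1)+i(k-1)=ik+1$, and read off the spiral structure on the resulting $k$-star. One small correction: in the actual configuration $c_1 < c_{2i} < \dots < c_4 < \alpha < c_3 < \dots < c_{2i-1} < c_0 < c_2$ used in the paper, the points lying in the non-critical sector are $q_1, q_4, q_6, \dots, q_{2i}$ rather than $q_1, q_3, \dots, q_{2i-1}$ (so the sector itinerary is not alternating), but since the count of $i$ versus $i+1$ points per sector is unchanged, the period computation and the spiral conclusion go through as you describe.
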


\begin{proof}
Consider the tip $c = -2$ of $V_R$. Its critical orbit is preperiodic to the $\beta$ fixed point with preperiod $1$, that is $\{0, c = -2, \beta = 2\}$. Then the $\alpha$ fixed point exists withing the edge $(-2, 0)$ of $T_c$. Let $\Phi_{p/k}: V_R \rightarrow V_{p/k, 1}$ be the quasiconformal surgery of sectors that maps $V_R$ onto the principal vein  $V_{p/k,1}$ in the $p/k$-limb. Then $c_{p/k, 1} = \Phi_{p/k}(c)$ is a parameter at the tip of $V_{p/k, 1}$ such that the critical orbit of $c_{p/k,1}$ maps to its beta fixed point with preperiod $k-1$. This is the Misiurewicz point in the $p/k$-limb that maps to $\beta$ with minimal preperiod \cite{T}.

Note that $T_{c_{p/k,1}}$ is a $k$-star, with critical orbit $\{c_0 = 0, c_1, \dots , c_k = \beta\}$ such that $c_i \mapsto c_{i+1}$ for all $0 \leq i \leq k-1$ and $c_k \mapsto c_k$, so it is the unique parameter of minimal preperiod $k-1$ that maps to $\beta$.

For any $i \geq 1$, and any $c \in C_{V_R}(2i+1)$, $T_c$ has a \v{S}tefan cycle as its critical orbit, say $\{c_0 = 0, c_1, c_2, \dots, c_{2i+1}\}$ with $c_j \mapsto c_{j+1}$ for all $j \geq 0$. Then their relative position along the interval $T_{C_{V_R}(2i+1)} = [c, f_c(c)]$ is given by $c_1 < c_{2i} < \dots < c_6 < c_4 < c_3 < c_5 < \dots <c_{2i-1} < c_0 = c_{2i+1} < c_2$. Then the $\alpha$ fixed point occurs between $c_3$ and $c_4$ (or in the case $i =1$, between $c_0$ and $c_1$). If we denote $I_0 = [\alpha, c_2]$ the critical sector of the interval, and $I_1 = [c_1, \alpha]$ the non-critical sector, then there are $i$ points of the critical orbit in $I_1$ and $i+1$ in $I_0$.

The dynamics of $\Phi_{p/k}(c)$ can be obtained by duplicating the non-critical sector ${k-2}$ with rotation number $p/k$ about $\alpha$.
Denote by $I_2, \dots, I_{k-1}$ copies of $I_1$, attached at $\alpha$, and embedded into the plane at a combinatorial rotation number of $p/k$ about $\alpha$. For each $1 \leq j \leq k-2$ let $\phi_{j, j+1}: I_j \rightarrow I_{j+1}$ be a homemorphjsm onto jts jmage, wjth $\phi_{j, j+1}(\alpha) = \alpha$. Then he dynamics of $f_{\phi{p/k}(c)}$ is given by $\phi_{j, j+1}: I_j \rightarrow I_{j+1}$ for $0 \leq j \leq k-2$ and $f_c|_{I_1}\circ \phi_{1, 2}^{-1} \circ \phi_{2,3}^-1 \circ \dots \circ \phi_{k-2,k-1}^{-1}: I_{k-1} \rightarrow I_0$, $f_c|_{T_c}: I_0 \rightarrow I_0 \cup I_{1}$. Thus $T_{\Phi_{p/k}(c)}$ is a spiral graph with a critical orbit of period $jk+1$.

\end{proof}

\begin{lem} [From Principal to Secondary Vein]
\label{secondary}
For each $1 \leq l \leq k-2$ let $V_l$ be a $(k,l)$ vein on the $p/k$-limb. Then there is a homeomorphism $\Psi_l=\Psi_{p/k,l}$ of $V_l$ to the $(k,l+1)$-vein $V_{l+1}$ of this limb, such that for all $i \geq 1$, $\Psi_l(C_{V_l}(ik+l)) = C_{V_{l+1}}(ik+l+1)$ and this is a spiral graph.
    
\end{lem}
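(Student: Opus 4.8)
The plan is to run the strategy of Lemma~\ref{principal} one more time, with the $(k,l)$-vein now playing the role of the real vein: a quasiconformal surgery of sectors on the dynamical planes, of the same type as $\Phi_{p,k}$ but performed near the tip rather than at the $\alpha$ fixed point, will insert one extra step into the spiraling critical orbit and so carry $V_l$ onto $V_{l+1}$. The spiral-graph assertion then follows by induction on $l$, with base case $l=1$ furnished by Lemma~\ref{principal}; so assume $C_{V_l}(ik+l)$ is a spiral graph on a $k$-star for every $i\ge 1$.

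I would begin with the tip. The tip of $V_l$ is the Misiurewicz point $c_l$, whose critical orbit $\{c_0=0,c_1,\dots,c_{k+l-1}=\beta\}$ is preperiodic to $\beta$ with preperiod $k+l-1$; its Hubbard tree is the $k$-star carrying this orbit in the spiral pattern, so that the critical arm and the arms $1,\dots,l-1$ each carry a second orbit point and the remaining arms carry one. I would define $\Psi_l(c_l)$ by cutting the dynamical plane of $f_{c_l}$ along the appropriate dynamical rays near $c_l$, duplicating the resulting non-critical sector once more, and regluing so that the critical orbit takes one additional step before reaching $\beta$. The new parameter has critical orbit preperiodic to $\beta$ with preperiod $k+l$, and its orbit portrait at $\alpha$ together with the sequence of wakes it crosses agrees with that of $c_{l+1}$; by uniqueness of the minimal-preperiod Misiurewicz point of a prescribed combinatorial type---already invoked in the proof of Lemma~\ref{principal}---this parameter is $c_{l+1}$, and we set $\Psi_l(c_l)=c_{l+1}$.

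Next I would extend the surgery over all of $V_l$: for arbitrary $c\in V_l$ perform the same cut-and-paste. The cutting rays and the duplicated sector vary continuously (holomorphically off the surgery region) with $c$, so this defines a continuous injection $\Psi_l\colon V_l\to\mathcal{M}$, and the reverse surgery, which erases the inserted sector, is a continuous inverse; hence $\Psi_l$ is a homeomorphism onto its image. That image is an embedded arc from the main cardioid to $c_{l+1}$ meeting each hyperbolic component in at most two points, so by uniqueness of the vein ending at $c_{l+1}$ it equals $V_{l+1}$. Moreover $\Psi_l$ carries the nested family of wakes of roots along $V_l$ order-preservingly onto the nested family along $V_{l+1}$, raising by one the period of every root of period $\ge k+l$ and fixing the roots of periods $k$ and $1$. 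In particular, if $c$ is the centre of $C_{V_l}(ik+l)$ then, by the inductive hypothesis, $f_c$ is a spiral graph with a period-$(ik+l)$ spiral cycle, and the surgery inserts exactly one vertex into this cycle, so $\Psi_l(c)$ is a spiral graph with a period-$(ik+l+1)$ spiral cycle; being the image of the distinguished period-$(ik+l)$ wake of $V_l$ under the order-preserving, period-incrementing $\Psi_l$, its component is the period-$(ik+l+1)$ component closest to the main cardioid along $V_{l+1}$, that is $\Psi_l(C_{V_l}(ik+l))=C_{V_{l+1}}(ik+l+1)$, and it is a spiral graph.

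The main obstacle is the precise bookkeeping behind the surgery: specifying exactly which dynamical rays to cut along and how the inserted sector maps, and then verifying that the resulting orbit portraits are exactly those that characterise $V_{l+1}$---in particular that the tip lands on $c_{l+1}$ and not on some other $(k,l')$-vein, that the spiral cycle gains exactly one vertex, and that the surgery increments periods along the vein as claimed, so that the closest period-$(ik+l)$ component along $V_l$ maps to the closest period-$(ik+l+1)$ component along $V_{l+1}$. A secondary difficulty is upgrading the bijection of distinguished subsets to a genuine homeomorphism of the two arcs, i.e.\ checking continuity of $\Psi_l$ at the countably many parameters where the combinatorial type of the Hubbard tree jumps. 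Alternatively, one could build $\Psi_l$ directly from an explicit isomorphism between the nested families of orbit portraits along $V_l$ and along $V_{l+1}$, concentrating the combinatorial content into showing that the period-incrementing correspondence matches the two wake structures.
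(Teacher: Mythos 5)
Your high-level strategy (a surgery carrying $V_l$ to $V_{l+1}$, plus an induction on $l$ starting from Lemma~\ref{principal}, plus a check that the spiral cycle gains one point) is the same shape as the paper's argument, but the key construction you propose is not the right one, and the part you set aside as ``bookkeeping'' is where all the content lives. The paper does not build the homeomorphism by another sector-duplication: it cites Riedl's thesis for the homeomorphism between the branches of $\mathcal{M}\setminus\{m_{p/k}\}$ and then describes its effect on Hubbard trees explicitly. That effect is \emph{not} a duplication of the non-critical sector ``near the tip'': duplicating the non-critical sector again is exactly the operation $\Phi_{p,k}$ performs at $\alpha$, and repeating it changes the rotation number at $\alpha$ (hence the limb), which is not what passing from $V_l$ to $V_{l+1}$ does --- both veins live in the same $p/k$-limb and both Hubbard trees are $k$-stars. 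The actual transformation takes place at the preimages $\alpha_{-1},\dots,\alpha_{-k}$ of $\alpha$ inside the tree: one attaches a new edge $J_j$ at each $\alpha_{-j}$ (pulled back from an interval $[\alpha, c_{k+1}']$), prunes the old continuations $(\alpha_{-j},c_j]$, and redirects the critical value to the new leaf $c'$, so that the orbit takes one extra step before rejoining the old orbit at $c_{k+1}$. Your picture of a single extra ``sector'' glued in near the tip does not produce this; in particular it gives you no control over where the new critical value sits relative to the $k$ arms, which is precisely what you need to conclude the new cycle is again a spiral cycle.

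Two further points. First, your description of the tip of $V_l$ for $l\ge 2$ as ``the $k$-star carrying the orbit in the spiral pattern'' is inaccurate: for $l\ge 2$ the points $c_1,\dots,c_{k+l-1}$ are all leaves of $T_{c_l}$ and the branch points are preimages of $\alpha$, so the tree is not a $k$-star with the orbit spiralling along the arms (compare the discussion of $\alpha_{-1}$ in the proof of Theorem~\ref{ordering}(1)). Second, the claims that the surgery ``inserts exactly one vertex into the cycle,'' preserves the spiral configuration, and sends the closest period-$(ik+l)$ component to the closest period-$(ik+l+1)$ component are exactly the assertions the lemma is asking you to prove; deferring them to a closing paragraph of ``obstacles'' leaves the proof without its core. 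To repair the argument you should either quote Riedl's homeomorphism (as the paper does) and then verify the tree-level description above, or carry out the tree surgery at the $\alpha_{-j}$ directly and check by hand that the resulting orbit $c'\mapsto f_c(c')\mapsto\cdots\mapsto c_{k+1}\mapsto\cdots$ has period $ik+l+1$ and sits on the $k$ arms in the spiral configuration.
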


\begin{proof}
We will use the homeomorphism $\Psi_l$ from $V_l$ to $V_{l+1}$ as described by Riedl in his thesis. If we remove $m_{p/k}$ the principal Misiurewicz point from $\mathcal{M}$, and remove the connected component of the main cardioid, we obtain $k-1$ connected components. In each of these connected components there is a Misiurewicz point that maps to $\beta$ with minimal preperiod. Then $\Psi_l$ maps the vein $V_l$ of the the $l$-th connected component to the vein $V_{l+1}$ of the $l+1$-th connected component and sends $c_l$ to $c_{l+1}$, the Misiurewicz points of their respective sector of minimal preperiods prefixed to $\beta$.

I will briefly describe $\Psi_l$ as a transformation of $c \in C_V(ik+l)$ to a parameter $\Psi_l(c)$ hybrid equivalent to $C_{V_{i+1}}(ik+l+1)$. Let $T_c$ be the Hubbard tree of $f_c$ embedded in the plane, and suppose it is a $k$-star, with the critical orbit $c_0 = 0, c_1 = c, c_2, \dots, c_{ik+l-1}$ and suppose it is a spiral cycle. Denote the edges of $T_c$ as $I_1, I_2, \dots, I_ik+l-1$, where $I_j$ is the edge closest to $\alpha$ incident to the vertex $c_j$. Consider the Markov graph, since $I_0 \rightarrow I_1 \rightarrow I_2 \rightarrow \dots \rightarrow I_{k-1}$, and $I_{k-1}$ maps to the union of edges which make up the segment $[c_1, c_k]$. Since $c_1$ and $c_k$ are on different sectors of $T_c$, $\alpha \in [c_1, c_k]$, thus there is a point $\alpha_{-k} \in I_1$ such that $f^{k}(\alpha_{-k}) = \alpha$, where $\alpha_{-k}$ is a preperiod $k-1$ point that maps to $\alpha$. 
Let $\alpha_{-k}, \alpha_{-k+1}, \alpha_{-k+2}, \dots \alpha_-1$ be the preperiodic points of $\alpha_{-k}$.

Consider $T_c$ embedded in the plane as a subset of $K_c$. Since $I_{2k}$ maps to the union of edges $I_{2k+1}, I_{k+1}$ (or if $i = 1$, reduce all indices mod $k+1$ so we have $I_{k-1}$ maps to $I_{k}, I_0$), there is a point $c_{k+1}' \in I_{2k}$ that maps to $c_{k+1}$. Let $I' = [\alpha, c_{k+1}']$, and let $J_{k-1+j} = f^{-j}(I')$ where we take the $j$-th preimage from $\alpha_{-j}$ for $1 \leq j \leq k$. Define $\overline{T_c} = T_c \bigcup \cup_{1 \leq j \leq k J_j}$, then each $\alpha_{-j} \in \overline{T_c}$ is of degree $3$. Let $c' = f_c^{-k}(c_{k+1}')$, the newly created leaf in $J_1$.

Define $T_c' = \overline{T_c}\setminus{\cup_{1 \leq j \leq k}(\alpha_{-j},c_j]}$. Define $f_{c'} : T_{c'} \rightarrow T_{c'}$  by $c_0 \mapsto c'$, $f_{c'}|{J_j} = f_c$ for $1 \leq j \leq k$. For all other vertices in $T_c \cap T_{c'}$, $f_c = f_{c'}$, and map the edges homeomorphically. This results in the following period $ik+l+1$ critical orbit: $\{c' = c'_1, f_{c}(c') = c'_2, \dots, f_c^{k+1}(c') = c_{k+1} = c'_{k+2}, \dots, f_c^{k+l-1}(c') = c_{k+l-1} =c'_{k+l}\}$. Thus this $T_{c'}$ is a spiral graph.

\end{proof}

\begin{prop} 
\label{visible}
Let $V$ be a $(k,l)$-vein. Then $C_V(k+l)$ is a narrow component in the tree of visible hyperbolic components in the $1/2$-wake of $C_V(k)$.
\end{prop}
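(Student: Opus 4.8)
\emph{Proof sketch.} The conclusion packages two claims: that $C_V(k+l)$ is a vertex of the tree $T_{1/2}$ of hyperbolic components visible from $C_V(k)$ inside its $1/2$-sub-wake, and that $C_V(k+l)$ is narrow, i.e.\ $W_{C_V(k+l)}$ contains no hyperbolic component of period $<k+l$. The central input is that the centre $c$ of $C_V(k+l)$ has Hubbard tree a spiral graph on a $k$-star with critical orbit of period $k+l$ — by Lemma~\ref{principal} if $l=1$, and by $l-1$ successive applications of Lemma~\ref{secondary} to the principal vein if $l\geq 2$ — so $Per(f_c)=\{m:m\leq_k k+l\}$, which by the correspondence between periodic orbits and wakes of orbit portraits equals $\{per(D):W_{C_V(k+l)}\subseteq W_D\}$.

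I would first locate $C_V(k+l)$ relative to $C_V(k)$. Since $0<l<k$, we have $k+l\not\equiv 0\pmod{k}$, so the third clause of the definition of $>_k$ gives $k+l>_k 2k$ (take $i=0$, $j=2$), and also $2k>_k k$; the side conditions $k,2k\leq_k k+l$ needed when $l\neq 1$ are checked the same way. Hence Theorem~\ref{ordering} gives $C_V(k+l)\succ_V C_V(2k)\succ_V C_V(k)$, so $W_{C_V(k+l)}\subsetneq W_{C_V(2k)}\subsetneq W_{C_V(k)}$. Now $V$ leaves $C_V(k)$ through its internal-angle-$1/2$ sub-limb — exactly as $V_R$ leaves the main cardioid through the basilica, a feature preserved by the surgery $\Phi_{p/k}$ and by the homeomorphisms $\Psi_l$, which alter the vein only near its tip — so the satellite at the root of that sub-limb, of period $2k$, is the first and hence closest period-$2k$ component along $V$; thus $C_V(2k)$ is that satellite and $W_{C_V(2k)}$ is precisely the $1/2$-sub-wake of $C_V(k)$. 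So $C_V(k+l)$ lies in the $1/2$-sub-wake, and it is visible from $C_V(k)$: any component strictly between $C_V(k)$ and $C_V(k+l)$ along $V$ lies in the $p/k$-limb $W_{C_V(k)}$, hence has period $\geq k$, with period $k$ occurring there only for $C_V(k)$, periods $k+1,\dots,k+l-1$ not occurring on $V$ since those integers lie outside $\mathbb{N}(k,l)$, and period $k+l$ first occurring at $C_V(k+l)$; so every such component has period $>k+l$, and $C_V(k+l)\in T_{1/2}$.

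For narrowness, suppose $D\subseteq W_{C_V(k+l)}$ were a hyperbolic component of period $<k+l$; by the limb bound its period is some $p$ with $k<p<k+l$, and $W_D$ is nested with $W_{C_V(k+l)}$ inside $W_{C_V(2k)}$. If $W_D\supsetneq W_{C_V(k+l)}$, then $c\in W_{C_V(k+l)}\subseteq W_D$, so $f_c$ realizes the portrait of $D$ and $p\in Per(f_c)=\{m:m\leq_k k+l\}$; but one checks directly from the three clauses of $>_k$ that no $p$ with $k<p<k+l$ satisfies $p\leq_k k+l$, a contradiction. It remains to exclude $W_D$ sitting strictly inside $W_{C_V(k+l)}$ or in the shoulder $W_{C_V(2k)}\setminus\overline{W_{C_V(k+l)}}$: here I would extend $[C_V(k),D]$ to a full vein and rerun the visibility argument, so that the component of least period met beyond $C_V(2k)$ has period $p$, placing a period-$p$ component on a vein through the $1/2$-sub-limb of $C_V(k)$ and forcing $p$ into the admissible-period set for that sub-limb, against $k<p<k+l$. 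Finally $W_{C_V(k+l)}$ contains no second component of period $k+l$, since such a $D'$ would satisfy $D'\succ_V C_V(k+l)$ and hence lie on $V$ nearer the tip, contradicting minimality of $C_V(k+l)$; this also covers the convention in which ``narrow'' forbids periods $\leq k+l$.

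The hard part will be the last case of the narrowness step — a low-period $D$ inside $W_{C_V(k+l)}$ or in its shoulder — since Theorem~\ref{ordering} constrains only components lying on $(k,l')$-veins. The cleanest route is to prove the equivalent statement that $k+l$ is the least period of a hyperbolic component in the $1/2$-sub-wake of $C_V(k)$: the minimal-period component of any wake is automatically narrow, so this minimality is the real content. I would establish it by induction over the tree of visible components, controlling the periods in the non-renormalizable part of $W_{C_V(2k)}$ with Theorem~\ref{ordering} and the spiral-graph period count $\{m:m\leq_k k+l\}$.
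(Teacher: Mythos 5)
Your proposal has two genuine gaps. First, a circularity: to show that $C_V(k+l)$ is visible from $C_V(k)$ you exclude components of period $k+1,\dots,k+l-1$ between them by appealing to the fact that these periods lie outside $\mathbb{N}(k,l)$, i.e.\ to part (1) of Theorem~\ref{ordering}; but the paper proves exactly that part of Theorem~\ref{ordering} \emph{from} Proposition~\ref{visible} (the non-existence of $C_V(n)$ for $k<n<k+l$ is deduced there from the narrowness of $C_V(k+l)$), so for $l\geq 2$ this step is not available to you. Second, and more seriously, the actual content of narrowness --- that no hyperbolic component of period $<k+l$ sits \emph{inside} $W_{C_V(k+l)}$ --- is precisely the case you defer as ``the hard part'' and never carry out; the case $W_D\supsetneq W_{C_V(k+l)}$ that you do handle via $Per(f_c)=\{m\ :\ m\leq_k k+l\}$ is not about narrowness at all, since a component whose wake strictly contains $W_{C_V(k+l)}$ does not lie in the wake of $C_V(k+l)$. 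Your closing suggestion of an ``induction over the tree of visible components'' is the right instinct but is not an argument.

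The missing ingredient is the Lau--Schleicher theorem on visible trees of narrow components, which is what the paper's proof runs on: $C_V(k)$ is a satellite of the main cardioid, hence narrow, so its tree of visible components in the $1/2$-sub-wake consists of exactly $k$ components, one of each period $k+1,\dots,2k$; these are identified with the components $C_{V_{l'}}(k+l')$ along the various $(k,l')$-veins, the $k-1$ components of periods $k+1,\dots,2k-1$ being the leaves of that tree and the period-$2k$ component its root. Narrowness of $C_V(k+l)$ then follows at once: a component of minimal period $m<k+l$ in its wake, chosen closest to $C_V(k+l)$, would itself be visible from $C_V(k)$ and hence be a vertex of that finite tree, contradicting the fact that $C_V(k+l)$ is a leaf. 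Without this counting input (or an explicit substitute for it), your argument does not close.
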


\begin{proof}
 All hyperbolic components adjacent to the main cardioid are narrow. Since $C_V(k)$ is narrow, its tree of visible components in its $1/2$-wake consist of exactly $k$ hyperbolic components of period $k+1, k+2, \dots, 2k$, by Lau-Schleicher. Moreover the tree of visible components from $C_V(k)$ is a tree with $k-1$ branches, with $k-1$ leaves, and the component of period $2k$ occurs at internal angle $1/2$ from $C_V(k)$.

Let us denote $V_l$ by the $(k,l)$-vein in the $p/k$-limb. Putting together all $k-1$ visible components of period $k+1, \dots, 2k-1$ along each of $V_1, \dots, V_{k-1}$, along with the component $C_{V_1}(2k)$ we obtain the tree of visible components. This tree consist of $k-1$ leaves, one for each of the $C_{V_l}(k+l)$, with the root component $C_{V_1}(2k)$.

Suppose it is not the case that $C_{V_l}(k+l)$ is narrow. Then $C_{V_l}(k+l)$ must contain in its wake another component of period less than $k+l$. Suppose $m$ is the minimal period $< k+l$ for which there exist components of period $m$ in the wake of $C_{V_l}(k+l)$. Let $C'$ be a component of period $m$ in the wake of $C_{V_l}(k+l)$ closest to $C_{V_l}(k+l)$ along the vein $[C', C_{V_l}(k+l)]$. Then $C'$ must be visible from $C_{V_l}(k)$, so $C'$ must be in the tree of visible components of $C_{V_l}(k)$, contradicting the fact that $C_{V_l}(k+l)$ is a leaf.
 \end{proof}

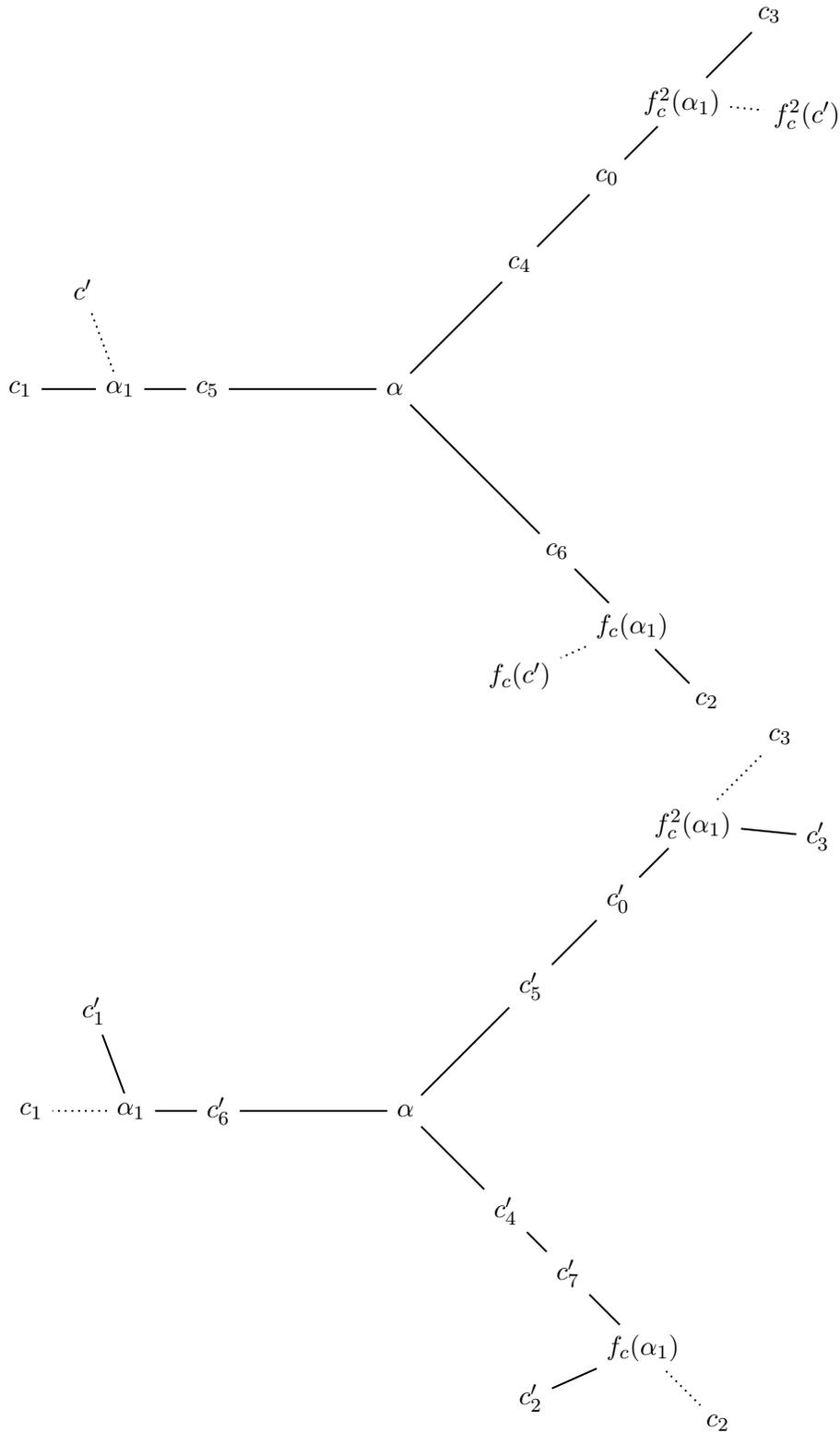
\begin{figure}
\centering
\begin{tikzpicture}[scale=1.8]
\node (0) at (0,0) {$\alpha$}; 
\node (1) at (1,1) {$c_4$};
\node (2) at (-1.5,0) {$c_5$};
\node (3) at (1.3,-1.3) {$c_6$};
\node (4) at (1.7,1.7) {$c_0$};
\node (5) at (-3,0) {$c_1$};
\node (6) at (2.5,-2.5) {$c_2$};
\node (7) at (3,3) {$c_3$};
\node (8) at (-2.2,0) {$\alpha_1$};
\node (9) at (1.9,-1.9) {$f_c(\alpha_1)$};
\node (10) at (2.3,2.3) {$f_c^2(\alpha_1)$};
\node (11) at (-2.5, 0.8) {$c'$};
\node (12) at (1, -2.3) {$f_c(c')$};
\node (13) at (3.3, 2.2) {$f_c^2(c')$};

\foreach \from/\to in {0/1,0/2,0/3,1/4,2/8,5/8,3/9,6/9,4/10,7/10}
    \draw (\from) -- (\to);
\draw[dotted] (8) -- (11);
\draw[dotted] (9) -- (12);
\draw[dotted] (10) -- (13);

\end{tikzpicture} 

\begin{tikzpicture}[scale=1.8]
\node (0) at (0,0) {$\alpha$}; 
\node (1) at (1,1) {$c_5'$};
\node (2) at (-1.5,0) {$c_6'$};
\node (3) at (1.3,-1.3) {$c_7'$};
\node (4) at (1.7,1.7) {$c_0'$};
\node (5) at (-3,0) {$c_1$};
\node (6) at (2.5,-2.5) {$c_2$};
\node (7) at (3,3) {$c_3$};
\node (8) at (-2.2,0) {$\alpha_1$};
\node (9) at (1.9,-1.9) {$f_c(\alpha_1)$};
\node (10) at (2.3,2.3) {$f_c^2(\alpha_1)$};
\node (11) at (-2.5, 0.8) {$c_1'$};
\node (12) at (1, -2.3) {$c_2'$};
\node (13) at (3.3, 2.2) {$c_3'$};
\node (14) at (0.8,-0.8) {$c_4'$};

\foreach \from/\to in {0/1,0/2,0/14,14/3,1/4,2/8,11/8,3/9,12/9,4/10,13/10}
    \draw (\from) -- (\to);
\draw[dotted] (8) -- (5);
\draw[dotted] (9) -- (6);
\draw[dotted] (10) -- (7);

\end{tikzpicture} 
\caption{Schematic diagram of $T_c$ and $T_{c'}$. Top: $T_c$ in solid lines, with dotted lines indicating the edges $J_1, \dots, J_k$. Bottom: $T_c'$ in solid lines, with dotted lines indicating pruned edges of $T_c$.}
\end{figure}

\begin{cor}[Visible Narrow Component]
Suppose $V$ is a $(k,l)$-vein. Then $C_V(k+l)$ is a narrow component whose Hubbard tree is a spiral graph. Moreover the set periods of hyperbolic components of $V$ are exactly the set of $n \geq (k/m-1)(k+l), n \in m\mathbb{N}$, where $m = \gcd(k,l)$. Moreover, for all $n \equiv l \mod k$, $C_V'(n) = C_V(n)$ is minimal.
\end{cor}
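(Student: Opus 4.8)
The plan is to build the statement out of the three results just established and then to perform a combinatorial reduction keyed to $m=\gcd(k,l)$. For the first assertion, narrowness of $C_V(k+l)$ is exactly Proposition~\ref{visible}, so it remains to identify its Hubbard tree as a spiral graph, and I would do this by induction on $l$. The base case $l=1$ is Lemma~\ref{principal} applied with $i=1$. For $l\ge 2$, Lemma~\ref{secondary} gives a homeomorphism $\Psi_{l-1}$ from the $(k,l-1)$-vein to the $(k,l)$-vein taking $C_{V_{l-1}}(k+l-1)$ to $C_{V_l}(k+l)$ with the latter a spiral graph, so composing $\Psi_1,\dots,\Psi_{l-1}$ on top of the principal-vein model of Lemma~\ref{principal} yields the claim for all $l$.

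For the second assertion I would start from Theorem~\ref{dynamics}(1): for $i\ge 1$ the centre of $C_V(ik+l)$ realizes the set of periods $\{n:n\le_k ik+l\}$, and $k+l$ is $>_k$-maximal among all $ik+l$ with $i\ge1$ (by rule~(3) of the $k$-Sharkovsky ordering), so the whole family of spiral components contributes no periods beyond $\{n:n\le_k k+l\}$. Writing $k=mk'$ and $l=ml'$ with $\gcd(k',l')=1$, two arithmetic observations drive the reduction: the residues $tl\bmod k$ for $t=0,\dots,k'-1$ are exactly the multiples of $m$ in $\mathbb{Z}/k\mathbb{Z}$, so everything appearing is a multiple of $m$; and the transitive closure of rule~(3) contains, in each such residue class, all elements above a threshold, the extreme case being $t=k'-1$ and reducing to $(k'-1)(k+l)=(k/m-1)(k+l)$. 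I would then establish the reverse inclusion, that each multiple of $m$ which is at least $(k/m-1)(k+l)$ really is realized, combining Theorem~\ref{ordering} (existence of the relevant $C_V(n)$ on the vein) with the $m$-fold tuning relating $V$ to the primitive $(k/m,l/m)$-vein.

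For the third assertion, fix $n=ik+l$ and let $C_V'(n)$ denote the spiral-graph component produced in the first step. Its wake is nested along $V$ inside the wakes of all components of period $\le_k n$ and strictly inside that of $C_V(k)$; since wakes of roots of components along a vein are totally nested (Section~2) and, by Theorem~\ref{ordering}, $C_V'(n)$ is the root of the spiral orbit portrait of period $n$ closest to the main cardioid along $V$, no period-$n$ component meeting $V$ can lie between $C_V'(n)$ and the main cardioid. Hence $C_V'(n)$ is the period-$n$ component closest to the cardioid, so $C_V'(n)=C_V(n)$, and this component is minimal.

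The step I expect to be the real obstacle is the second one: passing from the bare $k$-Sharkovsky tail $\{n:n\le_k k+l\}$ and the existence statement of Theorem~\ref{ordering} to the closed form $\{n\in m\mathbb{N}:n\ge (k/m-1)(k+l)\}$. This demands a precise description of the transitive closure of $>_k$—to pin down both the residue restriction and the sharp threshold—together with a careful treatment of the renormalization ($m$-tuning) structure of a $(k,l)$-vein, which has no analogue in the primitive case handled by Lemmas~\ref{principal} and~\ref{secondary} and is what converts the residue-dependent thresholds into the single uniform bound.
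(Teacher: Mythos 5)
Your overall route is the same as the paper's, whose entire proof is a single sentence: $C_V(k+l)$ is a spiral graph, hence its period set is the tail $\{n : n \leq_k k+l\}$, which is then \emph{asserted} without argument to equal $\{n \in m\mathbb{N} : n \geq (k/m-1)(k+l)\}$. Your treatment of the first assertion (Proposition~\ref{visible} for narrowness, Lemma~\ref{principal} as the base case and Lemma~\ref{secondary} for the induction on $l$) is exactly the chain the paper is implicitly relying on, and your wake-nesting argument for the third assertion fills in a claim the paper's proof does not address at all (indeed $C_V'(n)$ is never defined in the paper). So on those two points you are more complete than the source.

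The step you flag as the obstacle is, however, a genuine gap that cannot be closed as stated, because the arithmetic identity is false. By rule (3) of the $k$-Sharkovsky ordering with $i=0$, every multiple of $k$ satisfies $jk \leq_k k+l$, and by rule (1) so does $1$; these elements (and $k+l$ itself) are generally smaller than $(k/m-1)(k+l)$. Conversely, the thresholds in the nonzero residue classes are not uniform: the elements of the tail other than $1$, $k+l$, and the multiples of $k$ are exactly the $t(k+l)+jk$ with $t\geq 1$, $j\geq 1$, so the least element in the class $tl \bmod k$ is $t(k+l)+k$, not $t(k+l)$. Concretely, for $(k,l)=(3,1)$ one has $m=1$ and $(k/m-1)(k+l)=8$, yet $8 \neq 4i+3j$ for any $i\geq 0$, $j\geq 1$, so $8 \not\leq_3 4$, while $3,4,6,7 \leq_3 4$ are all below the claimed bound; both inclusions fail. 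The ``$m$-fold tuning'' you invoke is machinery not present in the paper and would not rescue this; the correct closed form is $\{1\}\cup k\mathbb{N}\cup\{k+l\}\cup\{t(k+l)+jk : t\geq 1,\ j\geq 1\}$, and any honest proof of the corollary has to either use that description or restate the corollary. Since the paper's own proof silently asserts the false identity, your instinct that this is where the work lies is exactly right.
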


\begin{proof}
    This follows from the fact that since $C_V(k+l)$ is a spiral graph, the set of periods of $C_V(k+l)$ is exactly the set $\{n| n \leq_k k+l\}$, which is exactly the set of $\{n | n \geq (k/m-1)(k+l), n \in m\mathbb{N}\}$.
\end{proof}

\begin{prop}[Orbit Forcing]
If $n_1 >_k n_2$ and both $C_V(n_1), C_V(n_2)$ exist such that $n_2\leq_k n+l$ then $C_V(n_1) \succ_V C_V(n_2)$.
\end{prop}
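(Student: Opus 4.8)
The plan is to follow the proof of Proposition~\ref{real}(2): I will reduce the statement to the assertion that the Hubbard tree $H$ of the centre $c^{*}$ of $C_{V}(n_{1})$ carries a repelling periodic orbit of exact period $n_{2}$, and then let the nesting of wakes along $V$ finish the argument. Suppose $H$ has such an orbit $o$. The case $n_{2}=1$ is immediate, since $C_{V}(1)$ is the main cardioid and every other component on $V$ lies in its wake, so assume $n_{2}\geq 2$; then, as $n_{1}\neq n_{2}$, the points of $o$ are not the endpoints of $H$ (those lie on the period-$n_{1}$ critical orbit), so they are cut points of $J_{c^{*}}$ of valence $\geq 2$ and $o$ carries an orbit portrait $Q$ of period $n_{2}$. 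Hence $c^{*}\in W_{Q}\cup\{r_{Q}\}$; a centre lies in the interior of $\mathcal{M}$ and is not a root, so $c^{*}\in W_{Q}$ and therefore $C_{V}(n_{1})\subseteq W_{Q}$. A vein lies in $\mathcal{M}$ and external rays do not, so $V$ can enter $W_{Q}$ only through $r_{Q}$; thus $r_{Q}$ is the root of a period-$n_{2}$ component on $V$, and since $C_{V}(n_{2})$ is the period-$n_{2}$ component closest to the main cardioid along $V$ and the wakes along $V$ are nested, $W_{Q}\subseteq W_{C_{V}(n_{2})}$. On the other hand the root of $C_{V}(n_{1})$ lies in $\overline{C_{V}(n_{1})}\cap\mathcal{M}\subseteq W_{Q}\cup\{r_{Q}\}$, so, as distinct external rays are disjoint, the two rays bounding $W_{C_{V}(n_{1})}$ lie in $W_{Q}$ and $W_{C_{V}(n_{1})}\subseteq W_{Q}$. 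Chaining, $W_{C_{V}(n_{1})}\subseteq W_{C_{V}(n_{2})}$, i.e.\ $C_{V}(n_{1})\succ_{V}C_{V}(n_{2})$, exactly as in Proposition~\ref{real}(2).

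It remains to show $n_{2}\in Per(H)$. Since $n_{1}>_{k}n_{2}$ we have $n_{2}\leq_{k}n_{1}$, and $n_{2}\leq_{k}k+l$ by hypothesis, so it suffices to prove
$$Per(H)\ \supseteq\ \{m:m\leq_{k}n_{1}\}\cap\{m:m\leq_{k}k+l\}.$$
If $n_{1}\equiv l\pmod k$, say $n_{1}=ik+l$ with $i\geq 1$, then by Theorem~\ref{dynamics}(1) $H$ is a spiral graph whose spiral cycle has period $ik+l\not\equiv 0\pmod k$, and the final Markov-graph statement of \S2 gives $Per(H)=\{m:m\leq_{k}ik+l\}$, which contains the right-hand side. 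If $n_{1}\in\{1,k\}$ the statement is trivial: $1$ is $>_{k}$-minimal, and $k>_{k}n_{2}$ forces $n_{2}=1$.

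The remaining case $n_{1}\not\equiv l\pmod k$ with $n_{1}\geq k+l$ is the crux. Here $H$ is still a $k$-star in which $\alpha$ is a fixed vertex of degree $k$ and rotation number $p/k$ and the period-$n_{1}$ critical orbit winds around $\alpha$ --- a fact I would derive by continuing the structure of Lemmas~\ref{principal} and~\ref{secondary} along $V$ --- but by Theorem~\ref{dynamics}(2) it is not a spiral graph, so the uniform Markov graph of \S2 does not apply. When $n_{1}\equiv 0\pmod k$ one has $n_{1}>_{k}n_{2}$ only for $n_{2}=1$ or $n_{2}=j'k$ with $n_{1}/k>_{2}j'$, and the desired relation $C_{V}(jk)\succ_{V}C_{V}(j'k)$ is exactly the $k$-times-Sharkovsky ordering of the period-$\equiv 0\!\pmod k$ components of $V$ --- the bottom portion of the orderings in Theorem~\ref{explicit} --- which I would establish in general from the surgery $\Phi_{p,k}$ relating these components to $V_{R}$ together with Proposition~\ref{real}. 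When $n_{1}\equiv l'\pmod k$ with $l'\notin\{0,l\}$, I would first pin down the (non-spiral) shape of $H$ for that residue class and then read off its Markov graph: the length-$k$ rotational cycle through the edges at $\alpha$, together with the length-$n_{1}$ critical cycle and the chords coming from where the orbit falls on the arms, should still furnish, for each $m$ in the displayed set, a nonrepetitive loop that is not built purely from edges adjacent to $\alpha$, whence the Markov-graph criterion of \S2 gives $m\in Per(H)$. The main obstacle is precisely this last bookkeeping: because the critical orbit no longer drops one point per arm per revolution, the Markov graph is not the clean chain-plus-two-chords of \S2, so one must determine $H$ residue-class by residue-class and verify that the truncated winding still produces every required nonrepetitive loop --- and I expect the hypothesis $n_{2}\leq_{k}k+l$ to be exactly what guarantees that the loops of those lengths survive in the non-spiral Markov graph.
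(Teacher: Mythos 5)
Your overall reduction is the right one and matches the paper's: produce a periodic orbit of exact period $n_2$ on the Hubbard tree of the centre of $C_V(n_1)$, pass to its orbit portrait, and use the nesting of wakes along $V$ to conclude $W_{C_V(n_1)}\subseteq W_{C_V(n_2)}$ --- this is precisely the mechanism of Proposition~\ref{real}(2), which the paper invokes implicitly in the step ``$Per(C_V(n_1))\ni n_2$, so $C_V(n_1)\succ_V C_V(n_2)$.'' Your spiral case $n_1\equiv l\pmod k$ and the trivial cases $n_1\in\{1,k\}$ are fine.

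The genuine gap is the case you yourself label ``the crux'' and leave as a program: $n_1\geq k+l$ with $n_1\not\equiv l\pmod k$. The paper disposes of it with two ideas you are missing, splitting on whether $n_1\leq_k k+l$ rather than on the residue of $n_1$. First, for $1<n_1\leq_k k+l$ (which includes $n_1=jk$ and $n_1=i(k+l)+jk$), no residue-class-by-residue-class determination of the tree is needed: it suffices that $T_{C_V(n_1)}$ is a $k$-star whose Markov graph contains the rotational $k$-cycle through the $k$ innermost edges $I_1,\dots,I_k$ together with the full $n_1$-cycle $I_1\rightarrow\cdots\rightarrow I_{n_1}\rightarrow I_1$, both passing through $I_1$; concatenating these two loops produces cycles of every length $ik+jn_1$ ($i\geq1$, $j\geq0$), which yields $n_2\in Per(C_V(n_1))$ and feeds into your wake argument. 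Second --- and this is the actual role of the hypothesis $n_2\leq_k k+l$, which you misread as a condition guaranteeing that loops ``survive'' in a non-spiral Markov graph --- when $n_1\not\leq_k k+l$ one needs no analysis of $T_{C_V(n_1)}$ at all: then $n_1\notin Per(C_V(k+l))=\{m \mid m\leq_k k+l\}$, so $W_{C_V(k+l)}\not\subseteq W_{C_V(n_1)}$, and since any two wakes along $V$ are nested this forces $W_{C_V(n_1)}\subseteq W_{C_V(k+l)}$, i.e.\ $C_V(n_1)\succ_V C_V(k+l)$; meanwhile $n_2\leq_k k+l$ gives $n_2\in Per(C_V(k+l))$ and hence $C_V(k+l)\succeq_V C_V(n_2)$ by the same wake argument, and transitivity finishes. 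As written, your proposal establishes the proposition only for $n_1$ in the residue class of $l$ (plus $n_1\in\{1,k\}$), so it does not prove the statement.
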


\begin{proof}
First suppose both $1< n_1, n_2 \leq_k k+l$. Then $T_V(n_1)$ and $T_V(n_2)$ are $k$-stars. Label the vertex set of $T_{V}(n_1)$ as $\{p_1, \dots, p_{n_1}\}\cup\{\alpha\}$ with $p_i \mapsto p_{i+1}$ for all $i$ the critical orbit, and $p_1, \dots, p_k$ the vertices closest to $\alpha$ on each of the $k$ segments from $\alpha$ to a leaf of $T_V(n_1)$. Denote the edge $I_i$ as the edge incident with $p_i$ closest to $\alpha$. Then the Markov graph of $T_V(n_1)$ contains the two cycles $I_1 \rightarrow \dots \rightarrow I_{k-1} \rightarrow I_1$ of length $k$, and $I_1 \rightarrow \dots \rightarrow I_{n_1}\rightarrow I_1$ of length $n_1$. So the Markov graph contains cycles of length $ik + j(n_1)$ for all $i \geq 1, j \geq 0$. So $Per(C_V(n_1)) \supseteq \{m | m \leq_k n_2\} \ni n_2$, so $C_V(n_1) \succ_V C_V(n_2)$.

Suppose $C_V(n_1) \not\leq_k k+l$ and $C_V(n_2) \leq_k k+l$. Then $C_V(n_l) \succ_V C_V(k+l)$ and $C_V(k+l) \succeq_V C_V(n_2)$.
\end{proof}

We now prove Theorem \ref{ordering}.

\begin{proof}

    \begin{enumerate}
    
        \item

        First note that $C_V(n)$ cannot exist for $1 < n < k$ because $V$ is on a $p/k$ limb. Additionally, $C_V(n)$ cannot exist for $k < n < n+l$, since $C_V(k+l)$ is a narrow component so $C_V(n) \not \succ_V C_V(k+l)$, and $C_V(k+l)$ is a spiral graph so $C_V(k+l) \not \succ_V C_V(n)$.
        
        First consider $l=1$, and label the critical orbit of $f_{c_l}$ by $\{c_0, c_1, c_2, \dots, c_{k-1}\}$ where $c_0 = 0$, and $c_i \mapsto c_{i+1}$ for $i \leq k-1$, and $c_{k} \mapsto c_{k}$. Then for $c_l$, its Hubbard tree is a $k$-star with vertex set $\{c_0, c_1, \dots, c_k\}\cup \{\alpha\}$, where $c_1, \dots, c_k$ are leaves, $\alpha$ is of degree $k$, $c_0 \in (\alpha, c_k)$. Denote $I_1, \dots, I_k$ the edges incident to $c_1, \dots, c_k$ respectively, $I_0$ the edge $[\alpha, 0]$. The Markov graph consists of the following cycle of length $k+1$ $I_k \mapsto I_0 \mapsto I_1 \mapsto \dots \mapsto I_{k-1} \mapsto I_k$, and the additional arcs $I_{k-1} \mapsto I_0$, $I_k \mapsto I_k$, and $I_k \mapsto I_1$. Then by repeatedly concatenating the loop $I_k \mapsto I_k$ to the cycle of length $k+1$, we can obtain non-elementary cycles of any length greater than $k$. The fact that the only periods less than $k+1$ are $k$ and $1$ is a consequence of the fact that $C_V(k+1)$ is a narrow component visible from $C_V(k)$.
        
        We now suppose $l > 1$. Consider $c_l$ the tip of the $(k,l)$-vein. We can label critical orbit by $\{c_0, c_1, c_2, \dots, c_{k+l-1}\}$ where $c_0 = 0$, and $c_i \mapsto c_{i+1}$ for $i \leq k-1$, and $c_{k+l-1} \mapsto c_{k+l-1}$. Then $c_{k+l-1} \mapsto c_{k+l}$. Since $c_l$ is a tip, each of the points $\{c_1, c_2, \dots, c_{k+l-1}\}$ is a leaf in the Hubbard tree $T_{c_l}$ of $f_{c_l}$. Consider $T_{c_l}$ as an abstract tree, with vertex set given by the critical orbit and all points of degree $>2$ in $T_{c_l}$, as well as $\alpha_{1}$, the other preimage of $\alpha$. The vertex set of $T_{c_l}$ consists of $\{c_0, \dots, c_{k+l-1}\}\cup B$, where $B$ is a finite set consisting of the branch points of $T_{c_l}$ and are preimages of $\alpha$. In particular $\alpha_{-1}$, the preimage of $\alpha$, is in the vertex set, and is located on the sector of $T_{c_l}$ containing $0$.
        
        The edge set of $T_{c_l}$ consists of edges between vertices in $T_{c_l}$. Denote the edges incident to each of the vertices of $c_1, \dots, c_{k+l}$ by $I_1, \dots, I_{k+l}$ respectively, and denote the edge $I_0$ by the edge incident to $c_0$ and $\alpha$. 
        
        I claim that the other endpoint of the edge $I_{k+l}$ is $\alpha_{-1}$. Suppose there is some $\alpha_i \in (0, \beta)$ with degree $>2$ for $T_{c_l}$, such that $\alpha_i$ does not map to $\alpha$. Since $\alpha$ has rotation number $p/k$ the sectors around $\alpha$ are permuted via this rotation. So $\alpha_i \mapsto \alpha_{i+1} \mapsto \dots, \mapsto \alpha_{i+{k-1}}$, with each $\alpha_{i+j}$ in a distinct sector. Consider a local tripod $T$ centered at $\alpha_i$, $T$ is mapped homeomorphically under $f_{c_l}$, so each of $\alpha_i, \alpha_{i+1}, \dots, \alpha_{i+k-1}$ consist of at two children, and thus the sector containing each of these points consists of at least two leaves for a total of at least $2k$ leaves. This contradicts the fact that the critical orbit has preperiod $k+l-1$ for some $l \leq k-1$.
        
        The Markov graph of $T_{c_l}$ consists of the path $I_0 \rightarrow I_1 \rightarrow I_2 \rightarrow I_3 \rightarrow \dots \rightarrow I_{k+l-2}\rightarrow I_{k+l-1}$. Since $\alpha_{-1} \mapsto \alpha$, $I_{k+l} \rightarrow I_0$ and $I_{k+l} \rightarrow I_{k+l}$ are arcs in the Markov graph. Thus $I_0 \rightarrow I_1 \rightarrow I_2 \rightarrow I_3 \rightarrow \dots \rightarrow I_{k+l-1}\rightarrow I_{k+l} \rightarrow I_0$ is a $k+l+1$-cycle in the Markov graph. By concatenating with arbitrary repeats of the arc $I_{k+l}\rightarrow I_{k+l}$, we can obtain cycles of length $k+l+i$ for all $i \geq 1$ in the Markov graph, hence we have periodic orbits of periods of all periods $\geq k+l+1$.

        \item By Proposition \ref{real} and Lemma \ref{principal}, $T_{C_V(n_1)}$ is a $k$-star. If $p_1, \dots, p_{n_1}$ are the critical orbit with $p_1, \dots, p_k$ the vertices of the critical orbit closest to $\alpha$ along each of the $k$ arms of $T_{C_V(n_1)}$, and $p_i \mapsto p_{i+1}$ for all $i$, then vertex set of $T_{C_V(n_1)}$ is $\{p_1, \dots, p_{n_1}\}\cup \{\alpha\}$. If $I_1, \dots, I_{n_1}$ are the edges incident with $p_1, \dots, p_{n_1}$ closest to $\alpha$ where $I_i \rightarrow I_{i+1}$ for all $i$, then the Markov graph contains the $k$-cycle $I_1 \rightarrow \dots \rightarrow I_{k} \rightarrow I_1$ and the $n_1$-cycle $I_1 \rightarrow \dots \rightarrow I_{n_1} \rightarrow I_1$, so there exist periodic orbits of all $m \leq_k n_1$.

        \item The result follows from Lemma \ref{secondary} and the same argument as the previous part.
        
    \end{enumerate}
\end{proof}

We now prove Theorem \ref{dynamics}.

\begin{proof}
    \begin{enumerate}
        \item The result follows from Proposition \ref{real}, Lemma \ref{principal}, and Lemma \ref{secondary}.

        \item If for some other $l' \neq l$ and $i \geq 1$ such that $C_{V_l}(ik+l')$ is also a spiral graph, then its set of periods is $Per(C_{V_l})(ik+l') = \{m | m \leq_k ik+l'\} = \{sk + t(ik+l) | s \geq 1, t \geq 0\}\cup \{1\}$. In particular, $ik+l \not\in Per(C_{V_l}(ik+l'))$, so this means $W_{C_{V_l}(ik+l')} \not\subseteq W_{C_{V_l}(ik+l)}$. But $Per(C_{V_l}(ik+l)) = \{sk + t(ik+l) | s \geq 1, t \geq 0\}\cup \{1\}$, and $ik+l' \not\in Per(C_{V_l}(ik+l))$, so $W_{C_{V_l}(ik+l)}\not\subseteq W_{C_{V_l}(ik+l')}$. Thus the two wakes are disjoint, but this contradicts the fact that any two wakes along the vein must be nested.
        
    \end{enumerate}
\end{proof}

We now prove Theorem \ref{ordering}.

\begin{proof}
\begin{enumerate}

\item This is because for $k = 2l$ elements of $Per(C_V(k+l))$ form a total ordering of $\leq_k$.

\item Since $Per(C_V(ik+l))$ are all spiral graphs, $Per(C_V(jk+l))\setminus Per(C_V((j+1)k+l)) = \{jk+l, (2j+1)k+l, (2j+2)k+l\}$, so $C_V(jk+l) \succ_V \{C_V((2j+1)k+l), C_V((2j+2)k+l)\} \succ_V C_V((2j+2)k+l)$. Since $(2j+1)k+l >_k (2j+2)k+l$ we must have $C_V((2j+1)k+l) \succ_V C_V((2j+2)k+l)$.
\end{enumerate}
\end{proof}

\end{document}